\setlist[itemize]{leftmargin=*} 
\setlist[enumerate]{leftmargin=*}
\newtheorem{theorem}{Theorem}[section]
\newtheorem{lemma}[theorem]{Lemma}
\newtheorem*{theorem*}{Theorem}
\theoremstyle{definition}
\newtheorem{defi}[theorem]{Definition}
\newcommand{\Z}{\mathbb{Z}}
\newcommand{\N}{\mathbb{N}}
\title{\textbf{\MakeUppercase{A square-grid coloring problem}}}
\author{\bf Matthew Kahle\\
The Ohio State University\\
e-mail address: mkahle@math.osu.edu \\
\bf Francisco Martinez-Figueroa \\
The Ohio State University\\
e-mail address: martinezfigueroa.2@osu.edu \\
\bf Alexander Soifer\\
University of Colorado at Colorado Springs\\
e-mail address: asoifer@uccs.edu
}
\date{\today}
\newcommand{\ExamplesA}{
\begin{figure}
\Large
		\centering
		\scalebox{0.65}{
		\begin{tikzpicture}
		    \draw (0,0)--(2,0);
		    \draw (0,1)--(2,1);
		    \draw (0,2)--(2,2);
		    \draw (0,0)--(0,2);
		    \draw (1,0)--(1,2);
		    \draw (2,0)--(2,2);
		    \node at (0.5, 0.5) {3};
		    \node at (1.5, 0.5) {3};
		    \node at (0.5, 1.5) {1};
		    \node at (1.5, 1.5) {2};
		\end{tikzpicture}
		\hspace{0.2in}
		\begin{tikzpicture}
		    \draw (0,0)--(3,0);
		    \draw (0,1)--(3,1);
		    \draw (0,2)--(3,2);
		    \draw (0,3)--(3,3);
		    \draw (0,0)--(0,3);
		    \draw (1,0)--(1,3);
		    \draw (2,0)--(2,3);
		    \draw (3,0)--(3,3);
		    \node at (0.5, 0.5) {2};
		    \node at (1.5, 0.5) {3};
		    \node at (2.5, 0.5) {4};
		    \node at (0.5, 1.5) {4};
		    \node at (1.5, 1.5) {5};
		    \node at (2.5, 1.5) {1};
		    \node at (0.5, 2.5) {1};
		    \node at (1.5, 2.5) {2};
		    \node at (2.5, 2.5) {3};
		\end{tikzpicture}
		\hspace{0.2in}
		\begin{tikzpicture}
		    \draw(0,0)--(4,0);
		    \draw (0,1)--(4,1);
		    \draw (0,2)--(4,2);
		    \draw (0,3)--(4,3);
		    \draw (0,4)--(4,4);
		    \draw (0,0)--(0,4);
		    \draw (1,0)--(1,4);
		    \draw (2,0)--(2,4);
		    \draw (3,0)--(3,4);
		    \draw (4,0)--(4,4);
		    \node at (0.5, 0.5) {6};
		    \node at (1.5, 0.5) {4};
		    \node at (2.5, 0.5) {2};
		    \node at (3.5, 0.5) {7};
		    \node at (0.5, 1.5) {3};
		    \node at (1.5, 1.5) {1};
		    \node at (2.5, 1.5) {5};
		    \node at (3.5, 1.5) {4};
		    \node at (0.5, 2.5) {5};
		    \node at (1.5, 2.5) {6};
		    \node at (2.5, 2.5) {7};
		    \node at (3.5, 2.5) {1};
		    \node at (0.5, 3.5) {1};
		    \node at (1.5, 3.5) {2};
		    \node at (2.5, 3.5) {3};
		    \node at (3.5, 3.5) {4};
		\end{tikzpicture}	
		}
		\caption{Optimal colorings for $n=2,3,4$.}
		\label{fig:examples1}
\end{figure}
}
\newcommand{\ExamplesB}{
\begin{figure}
\Large
\centering
\begin{subfigure}[a]{\textwidth}
\centering
\scalebox{0.6}{
        \begin{tikzpicture}
        \draw (0,0)--(5,0);
        \draw (0,0)--(0,5);
        \draw (0,1)--(5,1);
        \draw (1,0)--(1,5);
        \draw (0,2)--(5,2);
        \draw (2,0)--(2,5);
        \draw (0,3)--(5,3);
        \draw (3,0)--(3,5);
        \draw (0,4)--(5,4);
        \draw (4,0)--(4,5);
        \draw (0,5)--(5,5);
        \draw (5,0)--(5,5);
        \node at (0.5,0.5){2};
        \node at (0.5,1.5){5};
        \node at (0.5,2.5){3};
        \node at (0.5,3.5){6};
        \node at (0.5,4.5){1};
        \node at (1.5,0.5){6};
        \node at (1.5,1.5){8};
        \node at (1.5,2.5){1};
        \node at (1.5,3.5){7};
        \node at (1.5,4.5){2};
        \node at (2.5,0.5){9};
        \node at (2.5,1.5){2};
        \node at (2.5,2.5){4};
        \node at (2.5,3.5){8};
        \node at (2.5,4.5){3};
        \node at (3.5,0.5){6};
        \node at (3.5,1.5){5};
        \node at (3.5,2.5){7};
        \node at (3.5,3.5){9};
        \node at (3.5,4.5){4};
        \node at (4.5,0.5){4};
        \node at (4.5,1.5){9};
        \node at (4.5,2.5){3};
        \node at (4.5,3.5){1};
        \node at (4.5,4.5){5};
        \end{tikzpicture}}\hspace{0.2in}
\scalebox{0.6}{
\begin{tikzpicture}
        \draw (0,0)--(6,0);
        \draw (0,0)--(0,6);
        \draw (0,1)--(6,1);
        \draw (1,0)--(1,6);
        \draw (0,2)--(6,2);
        \draw (2,0)--(2,6);
        \draw (0,3)--(6,3);
        \draw (3,0)--(3,6);
        \draw (0,4)--(6,4);
        \draw (4,0)--(4,6);
        \draw (0,5)--(6,5);
        \draw (5,0)--(5,6);
        \draw (0,6)--(6,6);
        \draw (6,0)--(6,6);
        \node at (0.5,0.5){1};
        \node at (0.5,1.5){4};
        \node at (0.5,2.5){9};
        \node at (0.5,3.5){5};
        \node at (0.5,4.5){7};
        \node at (0.5,5.5){1};
        \node at (1.5,0.5){8};
        \node at (1.5,1.5){6};
        \node at (1.5,2.5){7};
        \node at (1.5,3.5){10};
        \node at (1.5,4.5){8};
        \node at (1.5,5.5){2};
        \node at (2.5,0.5){5};
        \node at (2.5,1.5){2};
        \node at (2.5,2.5){11};
        \node at (2.5,3.5){6};
        \node at (2.5,4.5){9};
        \node at (2.5,5.5){3};
        \node at (3.5,0.5){1};
        \node at (3.5,1.5){9};
        \node at (3.5,2.5){8};
        \node at (3.5,3.5){3};
        \node at (3.5,4.5){10};
        \node at (3.5,5.5){4};
        \node at (4.5,0.5){3};
        \node at (4.5,1.5){11};
        \node at (4.5,2.5){4};
        \node at (4.5,3.5){7};
        \node at (4.5,4.5){11};
        \node at (4.5,5.5){5};
        \node at (5.5,0.5){5};
        \node at (5.5,1.5){7};
        \node at (5.5,2.5){2};
        \node at (5.5,3.5){10};
        \node at (5.5,4.5){1};
        \node at (5.5,5.5){6};
\end{tikzpicture}}
\end{subfigure}\vspace{0.2in}
\begin{subfigure}[b]{\textwidth}
\centering
\scalebox{0.55}{
\begin{tikzpicture}
\centering
        \draw (0,0)--(7,0);
        \draw (0,0)--(0,7);
        \draw (0,1)--(7,1);
        \draw (1,0)--(1,7);
        \draw (0,2)--(7,2);
        \draw (2,0)--(2,7);
        \draw (0,3)--(7,3);
        \draw (3,0)--(3,7);
        \draw (0,4)--(7,4);
        \draw (4,0)--(4,7);
        \draw (0,5)--(7,5);
        \draw (5,0)--(5,7);
        \draw (0,6)--(7,6);
        \draw (6,0)--(6,7);
        \draw (0,7)--(7,7);
        \draw (7,0)--(7,7);
        \node at (0.5,0.5){3};
        \node at (0.5,1.5){7};
        \node at (0.5,2.5){2};
        \node at (0.5,3.5){6};
        \node at (0.5,4.5){12};
        \node at (0.5,5.5){8};
        \node at (0.5,6.5){1};
        \node at (1.5,0.5){1};
        \node at (1.5,1.5){10};
        \node at (1.5,2.5){13};
        \node at (1.5,3.5){11};
        \node at (1.5,4.5){1};
        \node at (1.5,5.5){9};
        \node at (1.5,6.5){2};
        \node at (2.5,0.5){5};
        \node at (2.5,1.5){2};
        \node at (2.5,2.5){4};
        \node at (2.5,3.5){9};
        \node at (2.5,4.5){6};
        \node at (2.5,5.5){10};
        \node at (2.5,6.5){3};
        \node at (3.5,0.5){7};
        \node at (3.5,1.5){11};
        \node at (3.5,2.5){8};
        \node at (3.5,3.5){5};
        \node at (3.5,4.5){3};
        \node at (3.5,5.5){11};
        \node at (3.5,6.5){4};
        \node at (4.5,0.5){12};
        \node at (4.5,1.5){5};
        \node at (4.5,2.5){3};
        \node at (4.5,3.5){13};
        \node at (4.5,4.5){9};
        \node at (4.5,5.5){12};
        \node at (4.5,6.5){5};
        \node at (5.5,0.5){2};
        \node at (5.5,1.5){10};
        \node at (5.5,2.5){12};
        \node at (5.5,3.5){8};
        \node at (5.5,4.5){7};
        \node at (5.5,5.5){13};
        \node at (5.5,6.5){6};
        \node at (6.5,0.5){8};
        \node at (6.5,1.5){6};
        \node at (6.5,2.5){4};
        \node at (6.5,3.5){10};
        \node at (6.5,4.5){4};
        \node at (6.5,5.5){1};
        \node at (6.5,6.5){7};
\end{tikzpicture}}\hspace{0.2in}
\scalebox{0.55}{
\begin{tikzpicture}
        \draw (0,0)--(8,0);
        \draw (0,0)--(0,8);
        \draw (0,1)--(8,1);
        \draw (1,0)--(1,8);
        \draw (0,2)--(8,2);
        \draw (2,0)--(2,8);
        \draw (0,3)--(8,3);
        \draw (3,0)--(3,8);
        \draw (0,4)--(8,4);
        \draw (4,0)--(4,8);
        \draw (0,5)--(8,5);
        \draw (5,0)--(5,8);
        \draw (0,6)--(8,6);
        \draw (6,0)--(6,8);
        \draw (0,7)--(8,7);
        \draw (7,0)--(7,8);
        \draw (0,8)--(8,8);
        \draw (8,0)--(8,8);
        \node at (0.5,0.5){1};
        \node at (0.5,1.5){6};
        \node at (0.5,2.5){2};
        \node at (0.5,3.5){12};
        \node at (0.5,4.5){3};
        \node at (0.5,5.5){15};
        \node at (0.5,6.5){9};
        \node at (0.5,7.5){1};
        \node at (1.5,0.5){14};
        \node at (1.5,1.5){10};
        \node at (1.5,2.5){4};
        \node at (1.5,3.5){9};
        \node at (1.5,4.5){6};
        \node at (1.5,5.5){12};
        \node at (1.5,6.5){10};
        \node at (1.5,7.5){2};
        \node at (2.5,0.5){4};
        \node at (2.5,1.5){15};
        \node at (2.5,2.5){8};
        \node at (2.5,3.5){11};
        \node at (2.5,4.5){4};
        \node at (2.5,5.5){7};
        \node at (2.5,6.5){11};
        \node at (2.5,7.5){3};
        \node at (3.5,0.5){1};
        \node at (3.5,1.5){3};
        \node at (3.5,2.5){5};
        \node at (3.5,3.5){15};
        \node at (3.5,4.5){13};
        \node at (3.5,5.5){1};
        \node at (3.5,6.5){12};
        \node at (3.5,7.5){4};
        \node at (4.5,0.5){5};
        \node at (4.5,1.5){14};
        \node at (4.5,2.5){11};
        \node at (4.5,3.5){6};
        \node at (4.5,4.5){8};
        \node at (4.5,5.5){10};
        \node at (4.5,6.5){13};
        \node at (4.5,7.5){5};
        \node at (5.5,0.5){12};
        \node at (5.5,1.5){8};
        \node at (5.5,2.5){2};
        \node at (5.5,3.5){13};
        \node at (5.5,4.5){3};
        \node at (5.5,5.5){7};
        \node at (5.5,6.5){14};
        \node at (5.5,7.5){6};
        \node at (6.5,0.5){14};
        \node at (6.5,1.5){9};
        \node at (6.5,2.5){5};
        \node at (6.5,3.5){7};
        \node at (6.5,4.5){9};
        \node at (6.5,5.5){2};
        \node at (6.5,6.5){15};
        \node at (6.5,7.5){7};
        \node at (7.5,0.5){2};
        \node at (7.5,1.5){10};
        \node at (7.5,2.5){10};
        \node at (7.5,3.5){3};
        \node at (7.5,4.5){13};
        \node at (7.5,5.5){11};
        \node at (7.5,6.5){1};
        \node at (7.5,7.5){8};
\end{tikzpicture}}
\end{subfigure}\vspace{0.2in}
\begin{subfigure}[c]{\textwidth}
\centering
\scalebox{0.55}{
\begin{tikzpicture}
\centering
       \draw (0,0)--(9,0);
\draw (0,0)--(0,9);
\draw (0,1)--(9,1);
\draw (1,0)--(1,9);
\draw (0,2)--(9,2);
\draw (2,0)--(2,9);
\draw (0,3)--(9,3);
\draw (3,0)--(3,9);
\draw (0,4)--(9,4);
\draw (4,0)--(4,9);
\draw (0,5)--(9,5);
\draw (5,0)--(5,9);
\draw (0,6)--(9,6);
\draw (6,0)--(6,9);
\draw (0,7)--(9,7);
\draw (7,0)--(7,9);
\draw (0,8)--(9,8);
\draw (8,0)--(8,9);
\draw (0,9)--(9,9);
\draw (9,0)--(9,9);
\node at (0.5,0.5){6};
\node at (0.5,1.5){3};
\node at (0.5,2.5){13};
\node at (0.5,3.5){5};
\node at (0.5,4.5){4};
\node at (0.5,5.5){12};
\node at (0.5,6.5){11};
\node at (0.5,7.5){17};
\node at (0.5,8.5){9};
\node at (1.5,0.5){13};
\node at (1.5,1.5){16};
\node at (1.5,2.5){4};
\node at (1.5,3.5){14};
\node at (1.5,4.5){8};
\node at (1.5,5.5){5};
\node at (1.5,6.5){6};
\node at (1.5,7.5){16};
\node at (1.5,8.5){2};
\node at (2.5,0.5){2};
\node at (2.5,1.5){11};
\node at (2.5,2.5){10};
\node at (2.5,3.5){7};
\node at (2.5,4.5){2};
\node at (2.5,5.5){15};
\node at (2.5,6.5){9};
\node at (2.5,7.5){1};
\node at (2.5,8.5){5};
\node at (3.5,0.5){4};
\node at (3.5,1.5){11};
\node at (3.5,2.5){15};
\node at (3.5,3.5){16};
\node at (3.5,4.5){12};
\node at (3.5,5.5){17};
\node at (3.5,6.5){8};
\node at (3.5,7.5){11};
\node at (3.5,8.5){3};
\node at (4.5,0.5){1};
\node at (4.5,1.5){2};
\node at (4.5,2.5){6};
\node at (4.5,3.5){8};
\node at (4.5,4.5){3};
\node at (4.5,5.5){10};
\node at (4.5,6.5){12};
\node at (4.5,7.5){14};
\node at (4.5,8.5){1};
\node at (5.5,0.5){5};
\node at (5.5,1.5){10};
\node at (5.5,2.5){1};
\node at (5.5,3.5){15};
\node at (5.5,4.5){14};
\node at (5.5,5.5){16};
\node at (5.5,6.5){9};
\node at (5.5,7.5){10};
\node at (5.5,8.5){8};
\node at (6.5,0.5){7};
\node at (6.5,1.5){6};
\node at (6.5,2.5){17};
\node at (6.5,3.5){3};
\node at (6.5,4.5){9};
\node at (6.5,5.5){5};
\node at (6.5,6.5){11};
\node at (6.5,7.5){13};
\node at (6.5,8.5){7};
\node at (7.5,0.5){17};
\node at (7.5,1.5){14};
\node at (7.5,2.5){2};
\node at (7.5,3.5){3};
\node at (7.5,4.5){4};
\node at (7.5,5.5){17};
\node at (7.5,6.5){7};
\node at (7.5,7.5){15};
\node at (7.5,8.5){4};
\node at (8.5,0.5){8};
\node at (8.5,1.5){13};
\node at (8.5,2.5){12};
\node at (8.5,3.5){7};
\node at (8.5,4.5){9};
\node at (8.5,5.5){13};
\node at (8.5,6.5){1};
\node at (8.5,7.5){12};
\node at (8.5,8.5){6};
\end{tikzpicture}}\hspace{0.2in}
\scalebox{0.55}{
\begin{tikzpicture}
       \draw (0,0)--(10,0);
\draw (0,0)--(0,10);
\draw (0,1)--(10,1);
\draw (1,0)--(1,10);
\draw (0,2)--(10,2);
\draw (2,0)--(2,10);
\draw (0,3)--(10,3);
\draw (3,0)--(3,10);
\draw (0,4)--(10,4);
\draw (4,0)--(4,10);
\draw (0,5)--(10,5);
\draw (5,0)--(5,10);
\draw (0,6)--(10,6);
\draw (6,0)--(6,10);
\draw (0,7)--(10,7);
\draw (7,0)--(7,10);
\draw (0,8)--(10,8);
\draw (8,0)--(8,10);
\draw (0,9)--(10,9);
\draw (9,0)--(9,10);
\draw (0,10)--(10,10);
\draw (10,0)--(10,10);
\node at (0.5,0.5){15};
\node at (0.5,1.5){1};
\node at (0.5,2.5){11};
\node at (0.5,3.5){14};
\node at (0.5,4.5){9};
\node at (0.5,5.5){3};
\node at (0.5,6.5){14};
\node at (0.5,7.5){18};
\node at (0.5,8.5){16};
\node at (0.5,9.5){1};
\node at (1.5,0.5){14};
\node at (1.5,1.5){12};
\node at (1.5,2.5){15};
\node at (1.5,3.5){8};
\node at (1.5,4.5){7};
\node at (1.5,5.5){15};
\node at (1.5,6.5){6};
\node at (1.5,7.5){19};
\node at (1.5,8.5){2};
\node at (1.5,9.5){13};
\node at (2.5,0.5){1};
\node at (2.5,1.5){8};
\node at (2.5,2.5){9};
\node at (2.5,3.5){17};
\node at (2.5,4.5){14};
\node at (2.5,5.5){4};
\node at (2.5,6.5){11};
\node at (2.5,7.5){9};
\node at (2.5,8.5){1};
\node at (2.5,9.5){6};
\node at (3.5,0.5){17};
\node at (3.5,1.5){16};
\node at (3.5,2.5){5};
\node at (3.5,3.5){3};
\node at (3.5,4.5){2};
\node at (3.5,5.5){10};
\node at (3.5,6.5){5};
\node at (3.5,7.5){18};
\node at (3.5,8.5){10};
\node at (3.5,9.5){8};
\node at (4.5,0.5){19};
\node at (4.5,1.5){11};
\node at (4.5,2.5){17};
\node at (4.5,3.5){6};
\node at (4.5,4.5){18};
\node at (4.5,5.5){17};
\node at (4.5,6.5){15};
\node at (4.5,7.5){13};
\node at (4.5,8.5){3};
\node at (4.5,9.5){11};
\node at (5.5,0.5){1};
\node at (5.5,1.5){7};
\node at (5.5,2.5){13};
\node at (5.5,3.5){10};
\node at (5.5,4.5){7};
\node at (5.5,5.5){4};
\node at (5.5,6.5){18};
\node at (5.5,7.5){11};
\node at (5.5,8.5){12};
\node at (5.5,9.5){10};
\node at (6.5,0.5){18};
\node at (6.5,1.5){3};
\node at (6.5,2.5){16};
\node at (6.5,3.5){15};
\node at (6.5,4.5){19};
\node at (6.5,5.5){5};
\node at (6.5,6.5){12};
\node at (6.5,7.5){2};
\node at (6.5,8.5){7};
\node at (6.5,9.5){16};
\node at (7.5,0.5){8};
\node at (7.5,1.5){8};
\node at (7.5,2.5){6};
\node at (7.5,3.5){2};
\node at (7.5,4.5){10};
\node at (7.5,5.5){14};
\node at (7.5,6.5){13};
\node at (7.5,7.5){9};
\node at (7.5,8.5){6};
\node at (7.5,9.5){12};
\node at (8.5,0.5){4};
\node at (8.5,1.5){13};
\node at (8.5,2.5){5};
\node at (8.5,3.5){8};
\node at (8.5,4.5){9};
\node at (8.5,5.5){16};
\node at (8.5,6.5){19};
\node at (8.5,7.5){4};
\node at (8.5,8.5){2};
\node at (8.5,9.5){17};
\node at (9.5,0.5){6};
\node at (9.5,1.5){15};
\node at (9.5,2.5){14};
\node at (9.5,3.5){19};
\node at (9.5,4.5){12};
\node at (9.5,5.5){4};
\node at (9.5,6.5){3};
\node at (9.5,7.5){1};
\node at (9.5,8.5){5};
\node at (9.5,9.5){7};
\end{tikzpicture}}
\end{subfigure}
\caption{Optimal colorings for $5\leq n\leq 10$, found with computer-aided search.}
\label{fig:examples2}
\end{figure}
 }
\newcommand{\ExamplesC}{
\begin{figure}
\Large
\centering
\scalebox{0.64}{
\begin{tikzpicture}
        \draw (0,0)--(19,0);
    \draw (0,0)--(0,19);
    \draw (0,1)--(19,1);
    \draw (1,0)--(1,19);
    \draw (0,2)--(19,2);
    \draw (2,0)--(2,19);
    \draw (0,3)--(19,3);
    \draw (3,0)--(3,19);
    \draw (0,4)--(19,4);
    \draw (4,0)--(4,19);
    \draw (0,5)--(19,5);
    \draw (5,0)--(5,19);
    \draw (0,6)--(19,6);
    \draw (6,0)--(6,19);
    \draw (0,7)--(19,7);
    \draw (7,0)--(7,19);
    \draw (0,8)--(19,8);
    \draw (8,0)--(8,19);
    \draw (0,9)--(19,9);
    \draw (9,0)--(9,19);
    \draw (0,10)--(19,10);
    \draw (10,0)--(10,19);
    \draw (0,11)--(19,11);
    \draw (11,0)--(11,19);
    \draw (0,12)--(19,12);
    \draw (12,0)--(12,19);
    \draw (0,13)--(19,13);
    \draw (13,0)--(13,19);
    \draw (0,14)--(19,14);
    \draw (14,0)--(14,19);
    \draw (0,15)--(19,15);
    \draw (15,0)--(15,19);
    \draw (0,16)--(19,16);
    \draw (16,0)--(16,19);
    \draw (0,17)--(19,17);
    \draw (17,0)--(17,19);
    \draw (0,18)--(19,18);
    \draw (18,0)--(18,19);
    \draw (0,19)--(19,19);
    \draw (19,0)--(19,19);
    \node at (0.5,0.5){31};
    \node at (0.5,1.5){23};
    \node at (0.5,2.5){19};
    \node at (0.5,3.5){9};
    \node at (0.5,4.5){14};
    \node at (0.5,5.5){18};
    \node at (0.5,6.5){16};
    \node at (0.5,7.5){19};
    \node at (0.5,8.5){1};
    \node at (0.5,9.5){22};
    \node at (0.5,10.5){12};
    \node at (0.5,11.5){16};
    \node at (0.5,12.5){9};
    \node at (0.5,13.5){1};
    \node at (0.5,14.5){10};
    \node at (0.5,15.5){25};
    \node at (0.5,16.5){2};
    \node at (0.5,17.5){20};
    \node at (0.5,18.5){1};
    \node at (1.5,0.5){7};
    \node at (1.5,1.5){1};
    \node at (1.5,2.5){30};
    \node at (1.5,3.5){20};
    \node at (1.5,4.5){3};
    \node at (1.5,5.5){1};
    \node at (1.5,6.5){34};
    \node at (1.5,7.5){26};
    \node at (1.5,8.5){12};
    \node at (1.5,9.5){32};
    \node at (1.5,10.5){9};
    \node at (1.5,11.5){24};
    \node at (1.5,12.5){29};
    \node at (1.5,13.5){5};
    \node at (1.5,14.5){16};
    \node at (1.5,15.5){4};
    \node at (1.5,16.5){19};
    \node at (1.5,17.5){21};
    \node at (1.5,18.5){2};
    \node at (2.5,0.5){9};
    \node at (2.5,1.5){4};
    \node at (2.5,2.5){13};
    \node at (2.5,3.5){24};
    \node at (2.5,4.5){11};
    \node at (2.5,5.5){16};
    \node at (2.5,6.5){32};
    \node at (2.5,7.5){28};
    \node at (2.5,8.5){15};
    \node at (2.5,9.5){2};
    \node at (2.5,10.5){30};
    \node at (2.5,11.5){6};
    \node at (2.5,12.5){34};
    \node at (2.5,13.5){7};
    \node at (2.5,14.5){14};
    \node at (2.5,15.5){6};
    \node at (2.5,16.5){13};
    \node at (2.5,17.5){22};
    \node at (2.5,18.5){3};
    \node at (3.5,0.5){2};
    \node at (3.5,1.5){26};
    \node at (3.5,2.5){21};
    \node at (3.5,3.5){12};
    \node at (3.5,4.5){35};
    \node at (3.5,5.5){20};
    \node at (3.5,6.5){6};
    \node at (3.5,7.5){35};
    \node at (3.5,8.5){5};
    \node at (3.5,9.5){12};
    \node at (3.5,10.5){14};
    \node at (3.5,11.5){22};
    \node at (3.5,12.5){9};
    \node at (3.5,13.5){13};
    \node at (3.5,14.5){27};
    \node at (3.5,15.5){9};
    \node at (3.5,16.5){18};
    \node at (3.5,17.5){23};
    \node at (3.5,18.5){4};
    \node at (4.5,0.5){16};
    \node at (4.5,1.5){9};
    \node at (4.5,2.5){3};
    \node at (4.5,3.5){6};
    \node at (4.5,4.5){2};
    \node at (4.5,5.5){23};
    \node at (4.5,6.5){33};
    \node at (4.5,7.5){19};
    \node at (4.5,8.5){27};
    \node at (4.5,9.5){4};
    \node at (4.5,10.5){28};
    \node at (4.5,11.5){20};
    \node at (4.5,12.5){17};
    \node at (4.5,13.5){8};
    \node at (4.5,14.5){30};
    \node at (4.5,15.5){25};
    \node at (4.5,16.5){15};
    \node at (4.5,17.5){24};
    \node at (4.5,18.5){5};
    \node at (5.5,0.5){33};
    \node at (5.5,1.5){35};
    \node at (5.5,2.5){27};
    \node at (5.5,3.5){23};
    \node at (5.5,4.5){13};
    \node at (5.5,5.5){29};
    \node at (5.5,6.5){4};
    \node at (5.5,7.5){10};
    \node at (5.5,8.5){15};
    \node at (5.5,9.5){22};
    \node at (5.5,10.5){2};
    \node at (5.5,11.5){33};
    \node at (5.5,12.5){12};
    \node at (5.5,13.5){18};
    \node at (5.5,14.5){34};
    \node at (5.5,15.5){12};
    \node at (5.5,16.5){23};
    \node at (5.5,17.5){25};
    \node at (5.5,18.5){6};
    \node at (6.5,0.5){8};
    \node at (6.5,1.5){21};
    \node at (6.5,2.5){6};
    \node at (6.5,3.5){16};
    \node at (6.5,4.5){20};
    \node at (6.5,5.5){15};
    \node at (6.5,6.5){32};
    \node at (6.5,7.5){20};
    \node at (6.5,8.5){26};
    \node at (6.5,9.5){8};
    \node at (6.5,10.5){11};
    \node at (6.5,11.5){18};
    \node at (6.5,12.5){27};
    \node at (6.5,13.5){31};
    \node at (6.5,14.5){10};
    \node at (6.5,15.5){30};
    \node at (6.5,16.5){3};
    \node at (6.5,17.5){26};
    \node at (6.5,18.5){7};
    \node at (7.5,0.5){3};
    \node at (7.5,1.5){31};
    \node at (7.5,2.5){11};
    \node at (7.5,3.5){2};
    \node at (7.5,4.5){27};
    \node at (7.5,5.5){7};
    \node at (7.5,6.5){3};
    \node at (7.5,7.5){34};
    \node at (7.5,8.5){24};
    \node at (7.5,9.5){28};
    \node at (7.5,10.5){22};
    \node at (7.5,11.5){5};
    \node at (7.5,12.5){33};
    \node at (7.5,13.5){24};
    \node at (7.5,14.5){21};
    \node at (7.5,15.5){18};
    \node at (7.5,16.5){24};
    \node at (7.5,17.5){27};
    \node at (7.5,18.5){8};
    \node at (8.5,0.5){10};
    \node at (8.5,1.5){25};
    \node at (8.5,2.5){27};
    \node at (8.5,3.5){34};
    \node at (8.5,4.5){21};
    \node at (8.5,5.5){4};
    \node at (8.5,6.5){35};
    \node at (8.5,7.5){17};
    \node at (8.5,8.5){10};
    \node at (8.5,9.5){7};
    \node at (8.5,10.5){16};
    \node at (8.5,11.5){25};
    \node at (8.5,12.5){22};
    \node at (8.5,13.5){19};
    \node at (8.5,14.5){31};
    \node at (8.5,15.5){6};
    \node at (8.5,16.5){1};
    \node at (8.5,17.5){28};
    \node at (8.5,18.5){9};
    \node at (9.5,0.5){12};
    \node at (9.5,1.5){21};
    \node at (9.5,2.5){16};
    \node at (9.5,3.5){8};
    \node at (9.5,4.5){1};
    \node at (9.5,5.5){11};
    \node at (9.5,6.5){25};
    \node at (9.5,7.5){14};
    \node at (9.5,8.5){23};
    \node at (9.5,9.5){32};
    \node at (9.5,10.5){29};
    \node at (9.5,11.5){7};
    \node at (9.5,12.5){18};
    \node at (9.5,13.5){25};
    \node at (9.5,14.5){28};
    \node at (9.5,15.5){10};
    \node at (9.5,16.5){27};
    \node at (9.5,17.5){29};
    \node at (9.5,18.5){10};
    \node at (10.5,0.5){1};
    \node at (10.5,1.5){29};
    \node at (10.5,2.5){3};
    \node at (10.5,3.5){32};
    \node at (10.5,4.5){25};
    \node at (10.5,5.5){33};
    \node at (10.5,6.5){29};
    \node at (10.5,7.5){31};
    \node at (10.5,8.5){34};
    \node at (10.5,9.5){14};
    \node at (10.5,10.5){19};
    \node at (10.5,11.5){11};
    \node at (10.5,12.5){29};
    \node at (10.5,13.5){17};
    \node at (10.5,14.5){21};
    \node at (10.5,15.5){5};
    \node at (10.5,16.5){32};
    \node at (10.5,17.5){30};
    \node at (10.5,18.5){11};
    \node at (11.5,0.5){33};
    \node at (11.5,1.5){8};
    \node at (11.5,2.5){5};
    \node at (11.5,3.5){13};
    \node at (11.5,4.5){34};
    \node at (11.5,5.5){30};
    \node at (11.5,6.5){20};
    \node at (11.5,7.5){2};
    \node at (11.5,8.5){4};
    \node at (11.5,9.5){8};
    \node at (11.5,10.5){20};
    \node at (11.5,11.5){5};
    \node at (11.5,12.5){26};
    \node at (11.5,13.5){22};
    \node at (11.5,14.5){7};
    \node at (11.5,15.5){23};
    \node at (11.5,16.5){17};
    \node at (11.5,17.5){31};
    \node at (11.5,18.5){12};
    \node at (12.5,0.5){5};
    \node at (12.5,1.5){19};
    \node at (12.5,2.5){28};
    \node at (12.5,3.5){35};
    \node at (12.5,4.5){11};
    \node at (12.5,5.5){16};
    \node at (12.5,6.5){4};
    \node at (12.5,7.5){20};
    \node at (12.5,8.5){18};
    \node at (12.5,9.5){23};
    \node at (12.5,10.5){11};
    \node at (12.5,11.5){9};
    \node at (12.5,12.5){23};
    \node at (12.5,13.5){35};
    \node at (12.5,14.5){30};
    \node at (12.5,15.5){28};
    \node at (12.5,16.5){11};
    \node at (12.5,17.5){32};
    \node at (12.5,18.5){13};
    \node at (13.5,0.5){14};
    \node at (13.5,1.5){32};
    \node at (13.5,2.5){13};
    \node at (13.5,3.5){26};
    \node at (13.5,4.5){3};
    \node at (13.5,5.5){28};
    \node at (13.5,6.5){27};
    \node at (13.5,7.5){17};
    \node at (13.5,8.5){24};
    \node at (13.5,9.5){30};
    \node at (13.5,10.5){26};
    \node at (13.5,11.5){33};
    \node at (13.5,12.5){21};
    \node at (13.5,13.5){14};
    \node at (13.5,14.5){4};
    \node at (13.5,15.5){17};
    \node at (13.5,16.5){13};
    \node at (13.5,17.5){33};
    \node at (13.5,18.5){14};
    \node at (14.5,0.5){11};
    \node at (14.5,1.5){25};
    \node at (14.5,2.5){27};
    \node at (14.5,3.5){22};
    \node at (14.5,4.5){29};
    \node at (14.5,5.5){6};
    \node at (14.5,6.5){6};
    \node at (14.5,7.5){26};
    \node at (14.5,8.5){32};
    \node at (14.5,9.5){21};
    \node at (14.5,10.5){16};
    \node at (14.5,11.5){31};
    \node at (14.5,12.5){4};
    \node at (14.5,13.5){24};
    \node at (14.5,14.5){7};
    \node at (14.5,15.5){33};
    \node at (14.5,16.5){28};
    \node at (14.5,17.5){34};
    \node at (14.5,18.5){15};
    \node at (15.5,0.5){31};
    \node at (15.5,1.5){8};
    \node at (15.5,2.5){17};
    \node at (15.5,3.5){30};
    \node at (15.5,4.5){33};
    \node at (15.5,5.5){3};
    \node at (15.5,6.5){15};
    \node at (15.5,7.5){31};
    \node at (15.5,8.5){35};
    \node at (15.5,9.5){15};
    \node at (15.5,10.5){13};
    \node at (15.5,11.5){1};
    \node at (15.5,12.5){15};
    \node at (15.5,13.5){8};
    \node at (15.5,14.5){35};
    \node at (15.5,15.5){10};
    \node at (15.5,16.5){18};
    \node at (15.5,17.5){35};
    \node at (15.5,18.5){16};
    \node at (16.5,0.5){1};
    \node at (16.5,1.5){12};
    \node at (16.5,2.5){21};
    \node at (16.5,3.5){11};
    \node at (16.5,4.5){15};
    \node at (16.5,5.5){18};
    \node at (16.5,6.5){7};
    \node at (16.5,7.5){20};
    \node at (16.5,8.5){24};
    \node at (16.5,9.5){9};
    \node at (16.5,10.5){31};
    \node at (16.5,11.5){5};
    \node at (16.5,12.5){17};
    \node at (16.5,13.5){2};
    \node at (16.5,14.5){29};
    \node at (16.5,15.5){14};
    \node at (16.5,16.5){26};
    \node at (16.5,17.5){1};
    \node at (16.5,18.5){17};
    \node at (17.5,0.5){4};
    \node at (17.5,1.5){14};
    \node at (17.5,2.5){9};
    \node at (17.5,3.5){10};
    \node at (17.5,4.5){10};
    \node at (17.5,5.5){32};
    \node at (17.5,6.5){1};
    \node at (17.5,7.5){14};
    \node at (17.5,8.5){2};
    \node at (17.5,9.5){24};
    \node at (17.5,10.5){22};
    \node at (17.5,11.5){34};
    \node at (17.5,12.5){19};
    \node at (17.5,13.5){7};
    \node at (17.5,14.5){12};
    \node at (17.5,15.5){20};
    \node at (17.5,16.5){10};
    \node at (17.5,17.5){2};
    \node at (17.5,18.5){18};
    \node at (18.5,0.5){23};
    \node at (18.5,1.5){15};
    \node at (18.5,2.5){7};
    \node at (18.5,3.5){28};
    \node at (18.5,4.5){12};
    \node at (18.5,5.5){19};
    \node at (18.5,6.5){15};
    \node at (18.5,7.5){30};
    \node at (18.5,8.5){5};
    \node at (18.5,9.5){23};
    \node at (18.5,10.5){10};
    \node at (18.5,11.5){8};
    \node at (18.5,12.5){6};
    \node at (18.5,13.5){17};
    \node at (18.5,14.5){3};
    \node at (18.5,15.5){25};
    \node at (18.5,16.5){13};
    \node at (18.5,17.5){3};
    \node at (18.5,18.5){19};
\end{tikzpicture}}
\caption{A $35$-complete coloring for $n=19$, found with a computer-aided search.}
\label{fig:examples3}
\end{figure}}
\begin{document}

\maketitle 

\section{Introduction}
The following problem was created by Alexander Soifer with the Colorado Mathematical Olympiad in mind, but it turned out to be too difficult for inclusion in the competition. In fact, even after several months of thinking about it, we still do not know the whole story.

Suppose $n \ge 2$, and we wish to plant $k$ different types of trees in the squares of an $n \times n$ square grid. We can have as many of each type as we want. The only rule is that every pair of types must occur adjacently somewhere in the grid. The question is: given $n$, what is the largest that $k$ can be? Denote this number by $\Gamma(n)$, and call this the \emph{complete coloring number} of the $n \times n$ grid.

A little thought shows us that $\Gamma(n) \le 2n-1$. We will prove this in the next section. The main question we are interested in is whether $\Gamma(n) = 2n-1$ for every $n \ge 2$. 

Edwards \cite{Edwards2009} showed that $\Gamma(n) = 2n-1$ for all sufficiently large $n$, but it is not clear how large $n$ must be, and the proof is not constructive.

The main contributions here are to describe optimal complete colorings for $n \le 8$, and almost optimal complete colorings for all $n$. In particular, we prove the following theorems.

\begin{theorem} \label{thm:smalln}
Suppose that $2 \le n\ \le 8$. Then $\Gamma(G_n) = 2n-1$. 
\end{theorem}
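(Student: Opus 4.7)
The strategy is purely constructive. The upper bound $\Gamma(G_n) \le 2n-1$ is established in the next section for every $n \ge 2$, so to prove the theorem it suffices, for each $n$ in the range $2 \le n \le 8$, to exhibit a complete coloring of the $n \times n$ grid using $2n-1$ colors: an assignment of one of $2n-1$ colors to each cell such that every one of the $\binom{2n-1}{2}$ unordered pairs of distinct colors occurs on some horizontally or vertically adjacent pair of cells.

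For the small cases $n = 2, 3, 4$ I would present the colorings of Figure~\ref{fig:examples1} and verify them by hand: the numbers of pairs to check ($3$, $10$, $21$) are small enough that the adjacencies realizing each color pair can be listed in a short table beneath each diagram. For $5 \le n \le 8$, hand verification is already tedious, so I would rely on the colorings of Figure~\ref{fig:examples2}, produced by computer-aided search. Once a candidate coloring is fixed, verification is mechanical: extract the multiset of color pairs appearing on the $2n(n-1)$ grid-adjacent cell pairs and check that all $\binom{2n-1}{2}$ required color pairs occur. This certificate can be re-checked directly from each figure by the reader.

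The main obstacle is not the verification step but the production of the colorings. The problem is extremely tight: an $n \times n$ grid has only $2n(n-1)$ adjacent cell pairs, while a $(2n-1)$-complete coloring requires $(2n-1)(n-1)$ distinct color pairs to be realized. The ratio $2n/(2n-1)$ tends to $1$ as $n$ grows; for $n = 8$ one has $112$ adjacencies that must cover all $105$ distinct color pairs, so almost every adjacency is forced to contribute a previously unused pair, and random or naive greedy searches fail quickly. I would therefore accompany the figures with a brief account of the search procedure used (e.g.\ backtracking with forward-checking, simulated annealing, or an integer-programming formulation), but emphasize that the logical content of the theorem is concentrated entirely in the seven explicit constructions together with their finite adjacency checks.
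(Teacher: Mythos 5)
Your proposal is correct and takes essentially the same approach as the paper: the upper bound $\Gamma(G_n)\le 2n-1$ follows from Lemma~\ref{lemma:ineq1}, and the theorem is then established by exhibiting the explicit $(2n-1)$-complete colorings of Figures~\ref{fig:examples1} and~\ref{fig:examples2}, found by hand for $n=2,3,4$ and by computer search for $5\le n\le 8$. Your additional remarks on the mechanical verification of the certificates and the tightness of the count ($112$ adjacencies covering $105$ pairs for $n=8$) are accurate but go beyond the paper's one-line justification.
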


Once we have the upper bound $\Gamma(G_n)$ in hand, to prove Theorem \ref{thm:smalln} only requires producing a single example for every $n = 2, 3, \dots, 8$. These are illustrated in Figures \ref{fig:examples1} and \ref{fig:examples2}. 

\begin{theorem}\label{thm:improved_roichman}
Let $n\geq 2$, then $\Gamma(G_n)\geq 2n-9$. Moreover:
$$\Gamma(G_n) \geq \begin{cases} 2n-6, & \text{for } n\equiv 0 \text{ or }1 \pmod 4\\
2n-7, & \text{for } n\equiv 2 \pmod 4\\
2n-9, & \text{ for } n\equiv 3 \pmod 4.\\
\end{cases}$$
\end{theorem}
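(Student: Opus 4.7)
My plan is to prove the bounds by explicit, residue-dependent construction of complete colorings of $G_n$. I will work separately in each class $n \bmod 4$, combining a small base case drawn from Figures \ref{fig:examples1}--\ref{fig:examples2} with an inductive extension that enlarges the grid by four in each dimension while adding eight new colors, thereby preserving the defect $c = 2n-k$ along the induction.

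For the base cases, I would use the optimal colorings already exhibited for $n \le 8$: concretely $\Gamma(G_5) = 9$, $\Gamma(G_6) = 11$, $\Gamma(G_7) = 13$, $\Gamma(G_8) = 15$, which serve as bases in residues $1, 2, 3, 0$ modulo $4$ respectively, and all exceed the target $2n_0 - c$ comfortably. These reduce the problem to describing a single extension operation.

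The crux of the proof is this extension step. Given a complete $k$-coloring of $G_n$, I would embed $G_n$ as the lower-left $n \times n$ subgrid of $G_{n+4}$ and fill the resulting L-shaped strip (of $4$ new rows on top and $4$ new columns on the right) with an explicit pattern using $8$ new color labels together with carefully placed repetitions of the $k$ old ones. Three sets of pairs must be witnessed: the $\binom{k}{2}$ old-old pairs (automatic from the preserved subgrid); the $\binom{8}{2} = 28$ new-new pairs (easily arranged inside the $4 \times 4$ corner block of the strip); and the $8k$ mixed new-old pairs. Iterating this extension from the chosen base case then gives the desired bound for all $n$ in the residue class.

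The main obstacle is the packing problem for the $8k$ mixed pairs. The L-strip contains roughly $16n$ new edges (interior edges plus the boundary with the preserved subgrid), while $8k \approx 16n$ new-old adjacencies are required, so the edge budget is tight up to constants and every choice must be essentially optimal. The construction amounts to a decomposition of the bipartite multigraph $K_{8,k}$ into a small number of paths, each of which can be laid along one row or column of the strip, with an old color $x$ planted at each cell where a given new color must be witnessed adjacent to $x$. Feasibility of this decomposition depends on the parity of $k$ and on small-modulus residues of $k$, which is exactly why the defect varies with $n \bmod 4$ in the statement: residues $0$ and $1$ admit the tightest packing and yield $2n-6$, residue $2$ pays a parity penalty giving $2n-7$, while the worst case $n \equiv 3 \pmod 4$ combines two unfavourable parity conditions and yields only $2n-9$.
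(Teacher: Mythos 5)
Your proposal is a plan rather than a proof: everything rests on the residue-preserving extension step ($G_n \to G_{n+4}$, eight new colors placed in the L-shaped strip), and that step is never actually constructed. Worse, one of the few concrete sub-claims you do make is false: the $4\times 4$ corner block of the strip has only $24$ internal edges, so it cannot realize all $\binom{8}{2}=28$ new--new pairs (indeed $\Gamma(G_4)\le 7$); those pairs must spill into the rest of the strip, which eats further into the edge budget you yourself describe as tight. The assertion that feasibility of the $K_{8,k}$ path decomposition ``depends on the parity of $k$'' and that this is ``exactly why'' the defect varies with $n\bmod 4$ is a guess, not a demonstration. And your framework proves too much: the extension adds $4$ to $n$ and $8$ to $k$, so it preserves the defect $2n-k$; starting from the optimal base $\Gamma(G_5)=9$ (defect $1$) and iterating would yield $\Gamma(G_n)=2n-1$ for all $n\equiv 1\pmod 4$, essentially resolving the paper's main open question. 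That should make you suspect the extension cannot actually be carried out at the defects you need, and in any case you have not shown it can be carried out at any defect.

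The paper's route is entirely different and sidesteps this tight local packing. It takes a long thin rectangle (the Modified Roichman Rectangle, of width $m=\lfloor n/4\rfloor$ and height about $16m$) carrying an explicit $(8m-7)$-complete coloring, folds it into $G_n$ via the Copy and Paste Lemma, and then realizes the last two or three colors by embedding explicitly colored paths (Lemma \ref{lemma:extension_path}) and $2$-ribbons (Lemma \ref{lemma:2_ribbon}) into the region left empty by the folding. The residue-dependence in the statement arises from comparing $8\lfloor n/4\rfloor-7$ with $2n$ and from how much empty space remains for these extensions in each case, not from any parity obstruction in a bipartite decomposition. To salvage your approach you would need to exhibit the L-strip coloring explicitly for the relevant values of $k$ and verify that the induction closes; as written there is no proof here.
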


Our exposition is organized as follows. In Section 2, we introduce some graph-theoretic language and briefly review what we found in the literature. In Section 3 we include the best complete colorings we found for small values of $n$. In Section 4, after introducing some useful tools, we bound the achromatic number and complete coloring number of some related graphs that we use as building blocks later. In Section 5, we prove Theorem \ref{thm:improved_roichman}, using the graphs of Section 4. Finally in Section 6, we include some almost optimal complete colorings for the analogous problem for $3$-dimensional cubes.

\section{Graph-theoretic formulation}
We can restate our problem using graph--theoretic language. In what follows, let $G$ be a finite simple graph.

\begin{defi}\label{def:coloring}\leavevmode
	\begin{enumerate}
		\item We say an assignment $c:V(G)\to [k]:=\{1,2,\dots,k\}$ is a $\mathbf{k}$-\textbf{complete coloring} if for every pair of different colors $i,j\in[k]$, $i\neq j$, there exists an edge $\{u,v\}\in E(G)$ such that $c(u)=i$ and $c(v)=j$. If moreover, for every edge $\{u,v\}$ we have that $c(u)\neq c(v)$, we say $c$ is a $\mathbf{k}$-\textbf{complete proper coloring}.
		\item The \textbf{complete coloring number} of $G$, denoted $\Gamma(G)$, is the maximal integer $k$ such that there exists a $k$-complete coloring of $G$.
		\item The \textbf{achromatic number} of $G$, denoted $\Psi(G)$ is the maximal integer $k$ such that there exists a $k$-complete proper coloring of $G$.  
	\end{enumerate}	 
\end{defi}

We can also consider a rectangular grid as a graph.

\begin{defi}
	Let $m,n\in \N$, the \textbf{rectangular grid of dimensions $(m\times n)$} is the graph $R_{m\times n}$ with vertices  $V=\{(i,j)\subset \Z^2: 1\leq i\leq m, 1\leq j\leq n\}$ and edges $(i_1,j_1)\sim (i_2,j_2)$ if and only if $|i_1-i_2|+|j_1-j_2|=1$.
\end{defi}

Thus a grid with $m$ rows and $n$ columns corresponds to the graph $R_{m\times n}$, and the square grid $G_n$ can be seen as the graph $R_{n\times n}$. Note there is also a correspondence between an assignment of colors to the unit cells of a grid and a coloring of the respective graph (Figure \ref{fig:correspondence_grid}). We won't distinguish between these two representations, and in fact it will be useful to think of such a coloring as numbers inside physical cells, that can be moved and re-arranged keeping their assigned colors inside.

The complete chromatic number is sometimes referred to in the literature as the \textit{achromatic} number of a graph \cite{Harary_Hedetniemi1970}, which has been extensively studied \cite{Pavol_Miller1976,Sampathkumar1976,Farber1986,Edwards2000}. See \cite{HM97} for a 1997 survey. Computing the complete chromatic number is known to be an NP-Hard problem \cite{Yannakakis_Gavril1980}, even for trees \cite{Cairnie_Edwards1997}. The problem of computing the complete chromatic number of an $m \times n$ grid is suggested as an open problem in \cite{HM97}. 

A corollary of the more general results of Edwards \cite{Edwards2009} is that for sufficiently large $n$, it is always possible to find a $(2n-1)$-complete proper coloring for the grid $G_n$. However, it is not clear from his proof how large $n$ needs to be, and the question remains open for small values of $n$.

\begin{figure}
	\centering
	\begin{subfigure}[a]{0.45\textwidth}
		\flushright
		\includegraphics[width=0.3\textwidth]{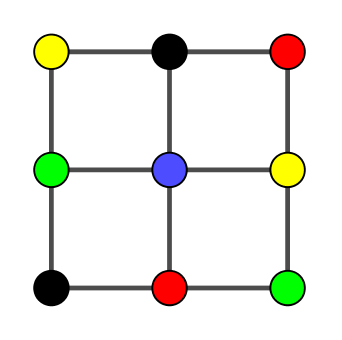}
	\end{subfigure}{\Large $\Leftrightarrow$}
    \begin{subfigure}[a]{0.45\textwidth}
    	\flushleft
    	\Large
    	\scalebox{0.6}{
		\begin{tikzpicture}
		    \draw (0,0)--(3,0);
		    \draw (0,1)--(3,1);
		    \draw (0,2)--(3,2);
		    \draw (0,3)--(3,3);
		    \draw (0,0)--(0,3);
		    \draw (1,0)--(1,3);
		    \draw (2,0)--(2,3);
		    \draw (3,0)--(3,3);
		    \node at (0.5, 0.5) {2};
		    \node at (1.5, 0.5) {3};
		    \node at (2.5, 0.5) {4};
		    \node at (0.5, 1.5) {4};
		    \node at (1.5, 1.5) {5};
		    \node at (2.5, 1.5) {1};
		    \node at (0.5, 2.5) {1};
		    \node at (1.5, 2.5) {2};
		    \node at (2.5, 2.5) {3};
		\end{tikzpicture}}
    \end{subfigure}
\caption{$G_3$ as a graph and rectangular grid}
\label{fig:correspondence_grid}
\end{figure}

From Definition \ref{def:coloring}, it is clear that any $k$-complete proper coloring is also a $k$-complete coloring, since the later relaxes the conditions on feasible colorings. Suppose there exists a $k$-complete coloring for a graph $G$ with $m$ edges and $n$ vertices, the following upper bounds follow. Since each pair of colors in $[k]$ must be assigned to a different edge, we get $\binom{k}{2}\leq m$. Let $\Delta=\Delta(G)$ denote the maximum degree of $G$. Since each one of the $k$ colors must be a neighbor with the remaining $(k-1)$ colors, it must be assigned to at least $\lceil (k-1)/\Delta \rceil$ vertices, so $k\left\lceil\frac{k-1}{\Delta}\right\rceil\leq n$. The following lemma summarizes these results.


\begin{lemma}\label{lemma:ineq1}
The following inequalities hold for every graph $G$.
\begin{enumerate}[label={\normalfont{\arabic*}.}]
    \item  $\vspace{-1em} \Psi(G) \leq \Gamma(G )$\vspace{1em}
    \item $\displaystyle\Gamma(G )\leq \max \left\{ k: {k \choose 2} \leq |E(G)| \right\} $
    \item $\displaystyle\Gamma(G ) \leq  \max \left\{ k: \textstyle{k\left\lceil\dfrac{k-1}{\Delta(G)}\right\rceil}\leq |V(G)| \right\} $
\end{enumerate}
\end{lemma}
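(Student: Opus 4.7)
The plan is to handle the three inequalities separately, each by a short counting argument of the flavor already hinted at in the paragraph preceding the lemma. Part 1 is immediate from Definition \ref{def:coloring}: a $k$-complete proper coloring is by definition a $k$-complete coloring that additionally avoids monochromatic edges, so every $k$ witnessed by a complete proper coloring is also witnessed by a complete coloring, and taking the maxima over all admissible $k$ gives $\Psi(G)\leq\Gamma(G)$.

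For Part 2, I would fix any $k$-complete coloring $c$ and, for each unordered pair $\{i,j\}\subset[k]$ with $i\neq j$, select a witnessing edge $e_{ij}\in E(G)$ whose endpoints carry colors $i$ and $j$. The key observation is that the map $\{i,j\}\mapsto e_{ij}$ is injective: a single edge has exactly two endpoints and hence contributes at most one unordered pair of distinct colors. This yields an injection from $\binom{[k]}{2}$ into $E(G)$, forcing $\binom{k}{2}\leq|E(G)|$; maximizing over all $k$ for which such a coloring exists gives the desired bound on $\Gamma(G)$.

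For Part 3, I would fix a $k$-complete coloring $c$ and denote by $V_i\subset V(G)$ the color class of $i$, with $n_i=|V_i|$. Because color $i$ must be adjacent to each of the remaining $k-1$ colors, the set of edges incident to $V_i$ must reach vertices in $k-1$ distinct color classes other than $V_i$, so at least $k-1$ such edges exist. Each vertex of $V_i$ contributes at most $\Delta(G)$ incident edges, giving $n_i\,\Delta(G)\geq k-1$, equivalently $n_i\geq\lceil(k-1)/\Delta(G)\rceil$. Summing over $i\in[k]$ and using $\sum_i n_i=|V(G)|$ yields $k\lceil(k-1)/\Delta(G)\rceil\leq|V(G)|$, and maximizing $k$ gives the stated bound.

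I do not anticipate any genuine obstacle here: all three parts are elementary counting arguments. The one spot that deserves a moment's care is Part 3, where one must verify that the $k-1$ edges from $V_i$ to the other color classes can actually be chosen to be distinct (they automatically are, since they have distinct second endpoints in different color classes), so that bounding their total count by $n_i\,\Delta(G)$ is legitimate.
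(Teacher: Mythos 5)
Your proposal is correct and follows essentially the same counting arguments the paper gives in the paragraph preceding the lemma: Part 1 from the definitions, Part 2 by assigning each color pair its own witnessing edge, and Part 3 by bounding each color class from below via the maximum degree. No issues to report.
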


\bigskip

The square grid graph $G_n$ has $m=2n(n-1)=2n^2-2n$ edges, $n^2$ vertices and maximum degree $\Delta=4$. Note $$\binom{2n}{2}=n(2n-1)=2n^2-n>m\text{, while}$$ $$\binom{2n-1}{2}=(2n-1)(n-1)=2n^2-3n+1<m \text{ and}$$ $$(2n-1)\left\lceil\frac{2n-2}{4}\right\rceil\leq (2n-1)\frac{2n}{4}<n^2.$$ Hence Lemma \ref{lemma:ineq1} readily gives $\Gamma(G_n)\leq 2n-1$. 

We are interested thus in whether this upper bound can always be attained for $n\geq 2$. 

\section{Small Examples}

We found $(2n-1)$-complete colorings for $n=2, 3,$ and $4$ by hand (Figure \ref{fig:examples1}). We found $(2n-1)$-complete colorings for $5\leq n\leq 10$ (Figure \ref{fig:examples2}), with the aid of a computer. So far, we were not able to find explicit $(2n-1)$-complete colorings for any $n\geq 11$. We note that the complete colorings in Figures \ref{fig:examples1} and \ref{fig:examples2} are proper for $3 \leq n \leq 7$, and hence $$\Psi(G_n)=\Gamma(G_n)=2n-1\text{ for }3\leq n\leq 7.$$ 

For bigger values of $n$ we were only able to find almost optimal complete colorings, as we state in the following lemma.

\begin{lemma}\label{lemma:n_9_19}
    $$2n-3\leq\Gamma(G_n)\leq 2n-1\text{ \normalfont{for} }9\leq n\leq 19$$
\end{lemma}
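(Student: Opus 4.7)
The plan separates the two inequalities. The upper bound $\Gamma(G_n) \le 2n-1$ requires no new work: applying Lemma \ref{lemma:ineq1}(2) with $|E(G_n)| = 2n(n-1)$ gives it for every $n \ge 2$, as already observed at the end of Section 2. So the substance of the lemma is the lower bound $\Gamma(G_n) \ge 2n-3$ for each $n \in \{9, 10, \ldots, 19\}$, and the strategy is to exhibit an explicit $(2n-3)$-complete coloring in each of these eleven cases. For $n = 9$ and $n = 10$ this is immediate from Figure \ref{fig:examples2}, which already displays $(2n-1)$-complete colorings; since $\Gamma(G_n) \ge 2n-1 \ge 2n-3$, both cases come for free.

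For the nine remaining values $n = 11, 12, \ldots, 19$, the plan is a computer-aided search. For each such $n$ I would set up a heuristic procedure (for instance simulated annealing, or greedy placement with random restarts and local color-swap repair) whose search space is all maps $V(G_n) \to [2n-3]$ and whose objective is the number of unordered color pairs realized on some edge. A successful run halts as soon as all $\binom{2n-3}{2}$ pairs are realized; correctness is then certified in $O(n^2)$ time by a direct scan over the edges, tabulating adjacent color pairs and confirming every pair appears. Figure \ref{fig:examples3} records the output for the extremal case $n=19$, giving a $35$-complete coloring; analogous outputs for $n = 11, \ldots, 18$ can be obtained in the same fashion and simply listed.

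The principal obstacle is the tightness of the target: $G_n$ has $2n(n-1) = 2n^2 - 2n$ edges, while $(2n-3)$-completeness demands that $\binom{2n-3}{2} = 2n^2 - 7n + 6$ specific pairs be realized, leaving only $5n-6$ edges of slack. This narrow margin is what makes naive random search degrade quickly as $n$ grows, so the practical difficulty lies in tuning the heuristic well enough to produce a single valid example for each $n$ in the range. Beyond this engineering hurdle, the argument carries no further conceptual content: the lemma is a finite verification, and each exhibited coloring closes one of the eleven cases.
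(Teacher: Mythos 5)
Your proposal matches the paper's proof essentially exactly: the upper bound is the general one from Lemma \ref{lemma:ineq1}, the cases $n=9,10$ are covered by the $(2n-1)$-complete colorings of Figure \ref{fig:examples2}, and for $n=11,\dots,19$ the paper likewise relies on computer-found $(2n-3)$-complete colorings, displaying only the $n=19$ case in Figure \ref{fig:examples3}. No further comment is needed.
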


\begin{proof}
We only need to find examples of complete colorings with $2n-3$ colorings for every $n=11, 12, \dots, 19$. These were found with the aid of a computer search. In the interest of saving space, we do not show all of these, but Figure \ref{fig:examples3} shows the case $n=19$.
\end{proof}

\ExamplesA
\ExamplesB
\ExamplesC

\section{Some Related Graphs}
\subsection{Tools}
We first introduce some definitions and lemmas that will make our results later easier to state and prove. 

\begin{defi}\leavevmode
	\begin{enumerate}
		\item Given a coloring $c:V(G)\to [k]$, and $i,j\in [k]$, we say that the coloring $c$ \textbf{realizes} the pair $\{i,j\}$, if there exist an edge $\{ v, w\} \in E(G)$ such that $c(v)=i$ and $c(w)=j$. 
		\item A \textbf{partial coloring} $c$ of $G$ is a coloring of some subgraph of $G$. That is, an assignment $c:V(H)\to [k]$ for some $k\in\N$ and some $H\subsetneq G$. If $c$ realizes all $[k]^2$ pairs of different colors, we say it is a $\mathbf{k}$\textbf{-complete partial coloring}.
		\item We say $c$ is a \textbf{$\mathbf{k}$-complete partial coloring with remainder $\mathbf{A}$}, if $c$ is a partial coloring of $G$ that realizes all pairs of different colors in $[k]^2\setminus{A}$.
		\item Given $c:V(H)\to[k]$ a $k$-complete partial coloring with remainder $A$ of $G$, we define a \textbf{$\mathbf{k}$-complete extension} of $c$, as a coloring $c':V(G)\setminus V(H)\to[k]$ that realizes all pairs of $A$. Note that then $c$ and $c'$ can be combined into a $k$-complete coloring of $G$:
		$$\bar{c}(v)=\begin{cases}
		c(v) & \text{ if }v\in V(H)\\
		c'(v) & \text{ if }v\not\in V(H)
		\end{cases}$$
	\end{enumerate}
\end{defi}

Our constructions in the next sections depend on constructing partial colorings for rectangles with prescribed size, and then using them to define partial colorings of the square grid, for this the idea of \textbf{copy and paste} a coloring will be useful.

\begin{lemma}[Copy and Paste]\label{lemma:copy_paste}
	Let $R_1$ and $R_2$ be rectangles of dimensions $(m_1\times n_1)$ and $(m_2\times n_2)$ respectively. Suppose $c_1$ is a partial coloring of $R_1$ that realizes the color pairs $A\subset[k_0]^2$. If there exists $k\in \N$ such that $km_1 \leq m_2$ and $n_1+(k-1) \leq kn_2$, then we can define a partial coloring $c_2$ of $R_2$ that also realizes the color pairs $A$.
\end{lemma}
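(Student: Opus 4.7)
The plan is to decompose $R_1$ into $k$ overlapping vertical strips of width $n_2$ and then stack these strips inside $R_2$ as $k$ non-overlapping horizontal bands. In detail, for $\ell = 1, \ldots, k$, choose an offset $a_\ell$ and set
\[
S_\ell = \{(i,j) \in V(R_1) : 1 \leq i \leq m_1,\ a_\ell + 1 \leq j \leq a_\ell + n_2\},
\]
where the $a_\ell$ are chosen so that $a_1 = 0$, $a_k = n_1 - n_2$, and consecutive strips overlap in at least one column (for instance, take $a_\ell = (\ell-1)(n_2 - 1)$ for $\ell < k$ and shift $S_k$ leftward to the right edge of $R_1$). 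The hypothesis $n_1 + (k-1) \leq k n_2$ is precisely what is needed for this arrangement to fit: it guarantees that $k$ strips of width $n_2$, with consecutive strips sharing at least one column, can jointly span all $n_1$ columns of $R_1$. Because consecutive $S_\ell$ share a full column, every edge of $R_1$, both horizontal and vertical, lies entirely inside at least one strip.

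Since $k m_1 \leq m_2$, the first $k m_1$ rows of $R_2$ partition into disjoint horizontal bands
\[
B_\ell = \{(i,j) \in V(R_2) : (\ell-1) m_1 + 1 \leq i \leq \ell m_1,\ 1 \leq j \leq n_2\}, \quad \ell = 1, \ldots, k,
\]
and each $B_\ell$, viewed as an induced subgraph of $R_2$, is isomorphic to $S_\ell$, viewed as an induced subgraph of $R_1$. Define the partial coloring $c_2$ of $R_2$ by transferring the coloring of each strip to the corresponding band:
\[
c_2(i,j) = c_1\bigl(i - (\ell-1) m_1,\ j + a_\ell\bigr) \quad \text{for } (i,j) \in B_\ell,
\]
and leave every vertex of $R_2$ outside $B_1 \cup \cdots \cup B_k$ (as well as any vertex whose preimage in $S_\ell$ was itself uncolored under $c_1$) uncolored.

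To finish, let $\{s, t\} \in A$. Since $c_1$ realizes $\{s,t\}$, some edge $\{u,v\} \in E(R_1)$ satisfies $c_1(u) = s$ and $c_1(v) = t$. By the covering property of the strips, this edge lies inside some $S_\ell$, and its image under the isomorphism $S_\ell \to B_\ell$ is an edge of $R_2$ whose endpoints receive the colors $s$ and $t$ under $c_2$. Hence $c_2$ realizes $\{s,t\}$. Extra adjacencies created in $R_2$ between different bands, or involving uncolored vertices, are harmless for the property of realizing $A$. The only genuine step in the argument is the strip arrangement itself; the combinatorics of how many width-$n_2$ strips overlapping by a column are needed to cover $n_1$ columns is exactly what the inequality $n_1 + (k-1) \leq k n_2$ encodes, and once that is set up the rest is bookkeeping.
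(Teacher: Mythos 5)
Your proof is correct and follows essentially the same route as the paper: cover $R_1$ by $k$ width-$n_2$ strips overlapping in at least one column (so every edge lies in some strip), then lay the strips out as disjoint $m_1\times n_2$ bands inside $R_2$; the inequality $n_1+(k-1)\leq kn_2$ plays exactly the role you identify. The only cosmetic difference is that the paper first pads $R_1$ to a rectangle of width exactly $kn_2-(k-1)$ so the strips tile with one-column overlaps, which sidesteps the minor bookkeeping your explicit offsets $a_\ell$ require when a strip would overrun the boundary of $R_1$.
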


\begin{proof}
	If $k=1$ then $m_1\leq m_2$ and $n_1\leq n_2$, so embedding the rectangle $R_1$ in $R_2$ gives the desired partial coloring. Suppose now that $k\geq 2$. Let $R'_1$ be a rectangle of dimensions $(m_1\times n_1')$ where $n_1'=kn_2-(k-1)$. Embed $R_1$ into $R_1'$, this produces a partial coloring $c_1'$ of $R_1'$ which still realizes $A$. Consider now the subgraphs $H_1, H_2, \cdots, H_k$ of $R_1'$ induced by the sets of vertices $$V(H_r)=\{(i,j)| 1\leq i\leq m_1,\ (r-1)(n_2-1)+1\leq j\leq r(n_2-1)+1 \}$$
	
    \noindent for $1\leq r\leq k$. Each $H_r$ is a rectangle of dimensions $(m_1\times n_2)$. Note that all edges of $R_1'$ appear in $\cup_{r=1}^k E(H_r)$. 
	
	Now think of these as disjoint rectangles with the cells colors coming from the coloring $c_1'$ of $R_1'$. We can put $H_1$ to the side of $H_2$, to the side of $H_3$, and so on to the side of $H_k$ (see Figure \ref{fig:pasting}). This produces a rectangle of dimensions $(km_1\times n_2)$ that can be embedded into $R_2$, so the colored cells give a partial coloring $c_2$ of $R_2$. If a pair of colors in $A$ is realized by $c_1'$, meaning there is an edge with those colors in $R_1'$, then that edge also appears in one of the $H_r$'s, and thus it appears in $R_2$, so $c_2$ also realizes that pair of colors. Therefore, $c_2$ is a partial coloring of $R_2$ that realizes $A$. 
	
	While the proof is now completed, let us point out that when we embed $R_1$ into $R_1'$, the last $kn_2-n_1-(k-1)$ rows of $R_1'$ have no color assigned. When this number is less than $n_2$, this is a rectangular subgraph of $R_2$ of dimensions $(m_1\times(kn_2-n_1-(k-1)))$ that is empty. This rectangle occurs in the columns $(k-1)m_1+1,\cdots, km_1$ and the last $kn_2-n_1-(k-1)$ rows. Also, since the pasting only uses the first $km_1$ columns of $R_2$, the last $m_2-km_1$ columns remain empty as well. 
\end{proof}

\begin{figure}
	\centering
	\includegraphics[width=0.5\linewidth]{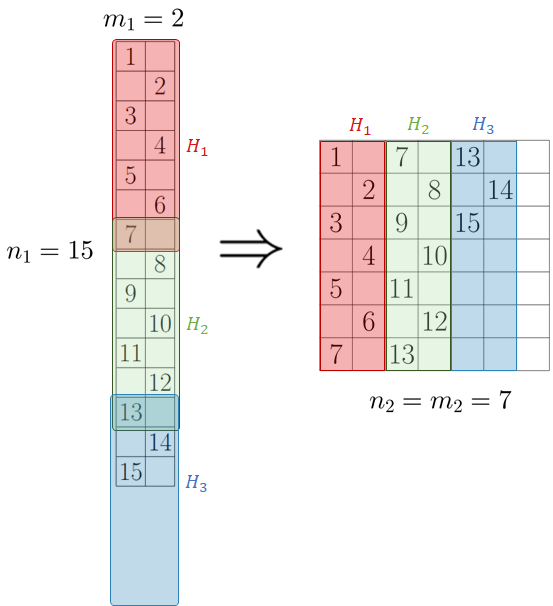}
	\caption{Copy and Paste process of a $(3\times 15)$ rectangle into a $(6\times 6)$ square. Here $k=3$.}
	\label{fig:pasting}
\end{figure}

\subsection{Achromatic Number of Paths and Roichmann Rectangles}

We start our exposition by computing the achromatic number of paths and what we refer as ``Roichman Rectangles''. While these are nice examples by themselves, we will use them later as some of our building blocks to produce complete colorings in the proof of our Theorem \ref{thm:improved_roichman}.

\begin{defi}
	For $n\geq 2$, the $n$-path is the graph $P_n$ with $(n+1)$ vertices $V(P_n)=\{0,1,\cdots,n\}$ and $n$ edges $u\sim v$ whenever $|u-v|=1$.
\end{defi}

\begin{theorem}\label{thm:achromatic_path}
	Given $n\leq 2$, let $q$ be the maximal integer such that $$\binom{q}{2}\leq n \hspace{1em}\text{and}\hspace{1em} q\left\lceil \frac{q-1}{2}\right\rceil\leq n+1,\text{ then}$$ $$\Psi(P_n)=q.$$
\end{theorem}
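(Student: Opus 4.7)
The plan is to prove the two inequalities $\Psi(P_n) \leq q$ and $\Psi(P_n) \geq q$ separately. The upper bound is immediate from Lemma \ref{lemma:ineq1}: since $P_n$ has $n$ edges, $n+1$ vertices, and maximum degree $\Delta = 2$, parts 2 and 3 of that lemma combined with $\Psi(P_n) \leq \Gamma(P_n)$ yield respectively $\binom{\Psi(P_n)}{2} \leq n$ and $\Psi(P_n) \lceil (\Psi(P_n)-1)/2 \rceil \leq n+1$, so $\Psi(P_n) \leq q$ by maximality of $q$.

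For the lower bound, the plan is to produce an explicit $q$-complete proper coloring of $P_n$ via an Eulerian-trail argument on (a mild modification of) the complete graph $K_q$. The underlying correspondence I will use is standard: a walk $w_0 w_1 \cdots w_N$ in a graph $H$ with vertex set $[q]$ determines a coloring of $P_N$ by giving its $i$-th vertex colour $w_i$; this coloring is proper whenever consecutive walk-vertices are joined by an edge of $H$, and it realizes the pair $\{i,j\}$ exactly when some edge labelled $ij$ is traversed.

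The construction splits by the parity of $q$. If $q$ is odd, every vertex of $K_q$ has even degree $q-1$, so $K_q$ admits a closed Eulerian trail of length $\binom{q}{2}$, and the associated coloring is a $q$-complete proper coloring of $P_{\binom{q}{2}}$. If $q$ is even, every vertex of $K_q$ has odd degree $q-1$; in this case I add a matching $M$ of size $(q-2)/2$ covering any $q-2$ of the vertices. This flips the parity of exactly $q-2$ vertices, leaving precisely two of odd degree, so $K_q \cup M$ admits an open Eulerian trail of length $\binom{q}{2} + (q-2)/2 = q^2/2 - 1$, yielding a $q$-complete proper coloring of $P_{q^2/2 - 1}$. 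In either case the path obtained has exactly $q \lceil (q-1)/2 \rceil$ vertices, which matches the smallest admissible value of $n+1$.

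Finally, to cover all larger $n$, I extend the coloring by appending vertices whose colours alternate between the current endpoint colour $c$ and any chosen $c' \neq c$: all new edges realize the already-covered pair $\{c, c'\}$, and the coloring remains proper. The main obstacle is the even-$q$ case, where one must check both the parity calculation (adding a matching on $q-2$ vertices leaves exactly two odd-degree vertices, ensuring connectedness and existence of an open Eulerian trail) and the edge count $\binom{q}{2} + (q-2)/2 = q^2/2 - 1 = q \lceil (q-1)/2 \rceil - 1$; both are elementary verifications, and the odd case follows the same script with the simpler fact that $K_q$ is already Eulerian.
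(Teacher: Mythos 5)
Your proof is correct, and while the upper bound and the odd-$q$ case coincide with the paper's argument, your treatment of the even-$q$ case takes a genuinely different route. The paper handles even $q$ in two stages: it runs an Eulerian circuit on $K_{q-1}$ (all of whose degrees are even) to realize every pair inside $[q-1]$ on the first $\binom{q-1}{2}$ edges of the path, and then splices on the explicit coloring of Lemma \ref{lemma:extension_path} (reversed so that its first color matches the circuit's endpoint) to realize the remaining pairs $[q-1]\times\{q\}$; the length check $(n+1)-\binom{q-1}{2}\geq \frac{3q-2}{2}$ does the bookkeeping. You instead stay inside a single Eulerian argument: double a matching on $q-2$ vertices of $K_q$ so that exactly two vertices keep odd degree, take an open Eulerian trail in the resulting connected multigraph, and read it off as a proper coloring of $P_{q^2/2-1}$; since every edge of $K_q$ is traversed, all pairs are realized, and the count $\binom{q}{2}+\frac{q-2}{2}=q\left\lceil\frac{q-1}{2}\right\rceil-1$ shows you hit the extremal $n$ exactly. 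Your version is cleaner and more self-contained (it is the classical ``Chinese postman'' style tightness argument, and it makes transparent why the bound $q\left\lceil\frac{q-1}{2}\right\rceil\leq n+1$ is the binding one for even $q$), at the cost of working with a multigraph rather than a simple graph. The paper's choice is partly motivated by reuse: Lemma \ref{lemma:extension_path} is needed again in the proof of Theorem \ref{thm:improved_roichman}, so routing the even case through it avoids introducing a second mechanism. Both arguments finish the same way, padding out larger $n$ with a trailing properly-colored segment.
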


From Theorem \ref{thm:achromatic_path} and Lemma \ref{lemma:ineq1}, we get that $\Gamma(P_n)=\Psi(P_n)$ and both are optimal. 

Let us prove the following lemma, that will also be useful the next section. 

\begin{lemma}\label{lemma:extension_path}
	Let $k\geq2$. Then we can construct a coloring for the path $P_{M}$ that realizes all color pairs $[k]\times\{k+1\}$, where $M=3k/2$ if $k$ is even, and $M=(3k-1)/2$ if $k$ is odd. 
\end{lemma}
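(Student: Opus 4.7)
The plan is to give an explicit construction. The key observation is that if color $k+1$ is placed at regularly spaced positions along the path, with the $k$ other colors occupying the remaining positions, then each color $i \in [k]$ will naturally appear adjacent to a $k+1$ and the pair $\{i, k+1\}$ will be realized. The trick is to space out the $k+1$'s far enough that not too many positions are ``wasted'' on them, while still keeping every other position adjacent to a $k+1$.

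I would use the pattern in which color $k+1$ occurs at every third position, and pairs of other colors fill the gaps, so the path reads
\[
(k+1),\, a_1,\, a_2,\, (k+1),\, a_3,\, a_4,\, (k+1),\, a_5,\, a_6,\, (k+1),\,\ldots
\]
where the $a_j$ range over distinct elements of $[k]$. Concretely, on $P_M$ with vertices indexed $0, 1, \ldots, M$, set $c(i) = k+1$ whenever $i \equiv 0 \pmod{3}$, and assign the colors $1, 2, \ldots, k$ bijectively to the remaining positions. For even $k$, I would take $M = 3k/2$, so that the $k+1$'s occupy positions $0, 3, 6, \ldots, 3k/2$, giving $k/2+1$ of them, and the remaining $k$ positions receive the colors $[k]$. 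For odd $k$, I would take $M = (3k-1)/2$, so that the $k+1$'s occupy positions $0, 3, 6, \ldots, 3(k-1)/2$, giving $(k+1)/2$ of them, and again the remaining $k$ positions receive the colors $[k]$.

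To finish, I would verify two things. First, the counting: in each case $M+1$ minus the number of $k+1$-positions equals exactly $k$, so the remaining colors fit bijectively. Second, the adjacency: every non-$(k+1)$ position has an index of the form $3j+1$ or $3j+2$, whose neighbor at position $3j$ (respectively $3j+3$, or $3(j+1)$) lies in the path and is colored $k+1$. Hence every color $i \in [k]$ is adjacent to a $k+1$-colored vertex, realizing the pair $\{i, k+1\}$.

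The construction is forced by the counting, so there is no real obstacle; the only care needed is at the odd-$k$ endpoint, where the last position $3(k-1)/2 + 1 = (3k-1)/2$ is adjacent to the $k+1$ at position $3(k-1)/2$ rather than the nonexistent position $3(k+1)/2$, but this is exactly why the parity split arises and the bound $(3k-1)/2$ (rather than $(3k+1)/2$) suffices in the odd case.
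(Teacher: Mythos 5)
Your proposal is correct and follows essentially the same construction as the paper: color $k+1$ is placed at every position divisible by $3$, the remaining $k$ positions receive the colors of $[k]$ bijectively, and the counting plus the endpoint check for odd $k$ match the paper's argument (the paper merely writes down one explicit bijection, $2\lfloor i/3\rfloor + (i \bmod 3)$, where you correctly observe that any bijection works).
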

\begin{proof}
	Consider the coloring $f:V(P_M)\to [k+1]$ given by
	$$f(i)=\begin{cases}
	k+1, & \text{ if } 3|i\\
	2\left\lfloor\frac{i}{3}\right\rfloor + (i\mod 3), & \text{ if } 3\not| i
	\end{cases}$$
	All 3rd vertices starting at 0 get the color $k+1$, ending at the vertex $3k/2$ if $k$ is even and at vertex $3(k-1)/2$ if $k$ is odd. 
	Note in both cases the last vertex not divisible by 3 gets the color $k$, so each color in $[k]$ appears adjacent to the color $k+1$ proving the claim (see Figure \ref{fig:extension_path}).
\end{proof}

\begin{figure}
	\centering
	\begin{subfigure}[a]{0.75\textwidth}
		\includegraphics[width=1\textwidth]{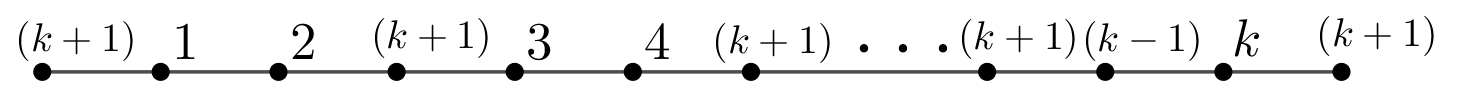}
		\caption{$k$ even}
	\end{subfigure}\\
	\begin{subfigure}[b]{0.75\textwidth}
		\includegraphics[width=1\textwidth]{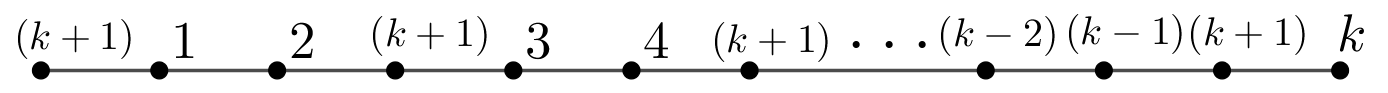}
		\caption{$k$ odd}
	\end{subfigure}
	\caption{Coloring of Paths realizing $[k]\times\{k+1\}$.}
	\label{fig:extension_path}
\end{figure}

To make our proof easier, we introduce the language of graph homomorphisms first. 

\begin{defi} Given two graphs $G$ and $H$, a \textbf{graph homomorphism} from $G$ to $H$ is a function $f:V(G)\to V(H)$ defined on the vertices of $G$ such that whenever $u\sim v$ in $G$ then also $f(u)\sim f(v)$ in $H$. 
\end{defi}

We can thus understand graph homomorphisms as functions that send vertices to vertices and edges to edges. Denoting by $K_k$ the $k$-complete graph, i.e. the graph with vertices $V(K_k)=[k]$ and edges all possible pairs of different vertices, we observe that a $k$-complete proper coloring $c:V(G)\to [k]$ can be seen as an edge-surjective graph homomorphism $c:G\to K_k$. 

\begin{proof}[Proof of Theorem \ref{thm:achromatic_path}]
	Note that any graph homomorphism $f:P_n\to H$ has as the image a $n$-walk from $f(0)$ to $f(n)$ in $H$. In particular, if $c:G\to K_k$ is a $k$-complete proper coloring, since it is edge-surjective, the corresponding $n$-walk on $K_k$ must traverse all edges. 
	
	Let $q$ be as defined in the statement of the theorem. Consider two cases:
	\begin{enumerate}
		\item Case $\mathbf{q}$ is \textbf{odd}. All vertices of $K_q$ have $q-1$ neighbors, so all have even degree. Recall that a graph has an \textit{Eulerian circuit} if and only if all vertices have even degree. Let $m=\binom{q}{2}$, and fix an Eulerian circuit $c_0, c_1, \cdots, c_m$ on $K_q$ that traverses each edge exactly once. Since by definition $m\leq n$, we can define the coloring $c:P_n\to K_q$ by
		$$c(i)=\begin{cases}
		c_i & \text{if } 0\leq i\leq m \\
		c(i-1)+1 \mod q & \text{if } m+1\leq i\leq n
		\end{cases} $$
		
		\item Case $\mathbf{q}$ is \textbf{even}. In this case $q-1$ is odd. Let $m=\binom{q-1}{2}$; just as above we can define a $(q-1)$-complete proper coloring of $P_{m}$ by using an Eulerian circuit on $K_{q-1}$, and choose it so that $c(0)=c(m)=q-1$. Using this coloring for the first $m$ vertices of the path $P_n$ produces a partial coloring realizing all pairs of colors in $[q-1]$, so we need to use the remaining vertices to realize the pairs $(j,q)$ for $1\leq j\leq q-1$. Let $H$ be the induced subgraph of $P_n$ on the set of vertices $\{m, m+1, \cdots, n\}$, so $H$ is a path with $(n+1)-m$ vertices. Note
		\begin{align*}
		    (n+1)-m &=(n+1)-\frac{(q-1)(q-2)}{2}\\
		    &\geq q\left\lceil\frac{q-1}{2}\right\rceil -\frac{(q-1)(q-2)}{2}\\
		    &=\frac{q^2}{2}-\frac{(q-1)(q-2)}{2}=\frac{3q-2}{2}.
		\end{align*}

		Using lemma \ref{lemma:extension_path}, we can define a coloring $c'$ on the path $P_M$, where $M=\frac{3q-4}{2}$, that realizes all pairs of colors we need. Define the \textit{reverse coloring} $\bar{c}$ as $\bar{c}(i)=c'(M-i)$, so $\bar{c}$ realizes the same pairs of colors and $\bar{c}(0)=q-1$. Since $P_M$ has at least $\frac{3q-2}{2}$ vertices, we can assign the first few vertices of the subgraph $H$ the corresponding colors of $\bar{c}$ in $P_M$, and note that this is possible since the color of vertex $m$ (the only vertex in $H$ that already had a color assigned) coincides with $\bar{c}$. Therefore, we have a partial coloring of $P_n$ realizing all pairs of colors in $[q]$. Finally, assign any proper coloring to the last few vertices of $P_n$ to produce the desired $q$-complete proper coloring. \qedhere
		\end{enumerate} 	
\end{proof}

The following rectangles and the explicit coloring that we present were first discovered by Roichman in \cite{Roichman1990}.

\begin{defi}
	Given $m\geq 4$, let $N=16(m-2)+2$. The corresponding \textbf{Roichman Rectangle} $R_m$ is the rectangular grid graph of dimensions $(N\times m)$.
\end{defi}

\begin{theorem}\label{thm:roichman1}
	Given $m\geq 4$. Let $\hat{\Psi}=8(m-2)$, then $$\hat{\Psi}\leq\Psi(R_m)\leq 8m-10.$$
\end{theorem}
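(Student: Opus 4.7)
The theorem splits into an upper bound $\Psi(R_m)\leq 8m-10$ and a lower bound $\Psi(R_m)\geq 8(m-2)$. The upper bound follows directly from Lemma \ref{lemma:ineq1}. Writing $N=16(m-2)+2=16m-30$, the rectangle $R_m$ has
$$|E(R_m)| = N(m-1)+m(N-1) = 2Nm-N-m = 32m^2-77m+30.$$
A direct calculation gives $\binom{8m-9}{2} = 32m^2-76m+45$, which exceeds $|E(R_m)|$ by $m+15>0$ for every $m\geq 4$. Hence no integer $k\geq 8m-9$ satisfies $\binom{k}{2}\leq|E(R_m)|$, so Lemma \ref{lemma:ineq1}(2) forces $\Gamma(R_m)\leq 8m-10$, and therefore $\Psi(R_m)\leq\Gamma(R_m)\leq 8m-10$ by Lemma \ref{lemma:ineq1}(1).

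For the lower bound my plan is to exhibit Roichman's explicit $8(m-2)$-complete proper coloring of $R_m$ from \cite{Roichman1990}. Setting $s=m-2$ and $k=8s$, the rectangle has $16s+2$ rows and $s+2$ columns. The arithmetic is essentially tight: since every vertex of $R_m$ has degree at most four, each color must occupy at least $\lceil(k-1)/4\rceil=2s$ cells, giving a baseline of $16s^2$ colored cells out of $|V(R_m)|=(16s+2)(s+2)=16s^2+34s+4$, leaving only $34s+4$ cells of slack. I would therefore present the coloring as a closed-form rule $c(i,j)$ built from a period-$16$ pattern in the row index combined with a column-dependent shift, so that each color class is a thin diagonal stripe of length about $2s$. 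The ``$+2$'' in $N=16s+2$ contributes two extra rows which serve as a buffer used to realize precisely those pairs that the periodic bulk misses.

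Verification then has two parts. Properness is immediate from the construction, since the period-plus-shift rule is designed so that no two orthogonally adjacent cells receive the same color. Completeness requires showing that each of the $\binom{8s}{2}$ unordered pairs of distinct colors appears on some edge of $R_m$; I would handle this by partitioning the pairs according to how the two colors relate under the row period and column shift, and for each class exhibiting a representative horizontal or vertical edge that realizes it (with the two buffer rows handling the residual classes).

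The main obstacle is designing the coloring itself, not the bookkeeping. Because each color class has size within $O(1)$ of the minimum $2s$, the stripes are forced to be almost rigid; engineering the row-period-plus-column-shift so that all $\binom{8s}{2}$ pairs appear as adjacencies on the highly elongated $(16s+2)\times(s+2)$ rectangle, while keeping the coloring proper, is the entire content of Roichman's construction. Once the explicit formula for $c(i,j)$ is written down, the rest of the proof is a finite, structured case check driven by the periodicity of the pattern.
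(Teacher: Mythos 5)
Your upper bound argument is correct and coincides with the paper's: the edge count $|E(R_m)|=32m^2-77m+30$ and the comparison $\binom{8m-9}{2}=32m^2-76m+45>|E(R_m)|$ are both right, and Lemma \ref{lemma:ineq1} then gives $\Psi(R_m)\leq\Gamma(R_m)\leq 8m-10$.

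The lower bound, however, is where the theorem actually lives, and your proposal does not prove it. Your counting heuristics are accurate (each color class must have size at least $\lceil(8s-1)/4\rceil=2s$, and $|V(R_m)|=16s^2+34s+4$ leaves little slack), but you never write down the coloring, and you concede yourself that designing it ``is the entire content of Roichman's construction.'' A proof cannot defer its entire content to an unexhibited formula. The paper supplies it explicitly: on even vertices (those with $2\mid(i+j)$) set $c(i,j)=\frac{i}{2}+3\frac{j}{2}\bmod\hat{\Psi}$, and on odd vertices with $i\geq 2$ and $2\leq j\leq m-1$ set $c(i,j)=c(i-1,j)-4(j-1)+2\bmod\hat{\Psi}$, leaving the remaining odd vertices (rows $1$ and $m$, column $1$) to be filled with any colors avoiding their neighbors. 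Properness is then a one-line check: the neighbors of an odd vertex of color $a$ in row $j$ carry the four consecutive colors $a+4(j-1)-3,\dots,a+4(j-1)$ modulo $\hat{\Psi}$. Completeness is a residue argument, not the case analysis you sketch: given distinct colors $a,b$ with $x=(b-a)\bmod\hat{\Psi}\in\{1,\dots,\hat{\Psi}/2\}$, write $x+3=4q+r$ with $1\leq q\leq m-2$ and $0\leq r\leq 3$, so $x=4\left[(q+1)-1\right]-(3-r)$; since each interior row contains an odd vertex of every color, some odd vertex of color $a$ in row $q+1$ has a neighbor of color $a+x\equiv b$. Your plan of ``partitioning pairs according to the row period and column shift'' gestures toward this, but without the explicit rule for $c(i,j)$ neither the properness nor the completeness claim can be verified, so the lower bound remains unproven as written.
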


\begin{proof}
	The upper bounds follows from Lemma \ref{lemma:ineq1} by noting $\binom{8m-10}{2}\leq |E(R_m)|<\binom{8m-9}{2}$. To prove the lower bound, Roichman introduced the following explicit partial coloring $c:V(R_m)\to\{0,\cdots,\hat{\Psi}\}$:
	
	$$c(i,j)=\begin{cases}
	\frac{i}{2}+3\frac{j}{2} \mod\hat{\Psi} & \text{ if } 2|(i+j)\\
	c(i-1,j)-4(j-1)+2\mod\hat{\Psi} & \text{ if } 2\not|(i+j), i\geq 2, \\
	&\text{ and} 2\leq j\leq m-1,\ 
	\end{cases}$$
	
	Note that $c$ leaves empty all odd vertices in rows 1 and $m$, and also in column 1. $c$ will be a complete coloring independently of the color of these, so to have a complete proper coloring, simply assign to them any color different to its neighbors. 
	
	If $(i,j)$ is an odd vertex of color $a$, with $2\leq j\leq m-1$, its neighbors have colors $a+4(j-1)-3$, $a+4(j-1)-2$, $a+4(j-1)-1$ and $a+4(j-1)$, all modulo $\hat{\Psi}$, this shows the coloring is always proper. Also note that in every fixed row $2\leq j\leq m-1$, the color of each parity of vertices increases by one, modulo $\hat{\Psi}$, as $i$ increases. Since in these rows there are $N/2=8m-15$ even vertices and $N/2-1=8m-16$ non-empty odd vertices, each parity of vertices attains all colors.
	
	We now prove that this is a complete partial coloring. Take any pair of different colors $a,b$, and suppose that $(b-a)\mod\hat{\Psi}\in\{1,\cdots,\hat{\Psi}/2\}$, otherwise since $a-b\equiv \hat{\Psi}-(b-a)\mod\hat{\Psi}$ simply switch $a$ and $b$. Let $x= (b-a)\mod\hat{\Psi}$, we can write $x+3=4q+r$, where $1\leq q\leq \hat{\Psi}/8= m-2$ and $0\leq r\leq 3$, hence $x=4[(q+1)-1]-(3-r)$, where $2\leq q+1\leq m-1$ and $0\leq 3-r\leq 3$. Then there is an odd vertex of color $a$ in row $q+1$, and one of its neighbors has color $a+x\equiv b\mod\hat{\Psi}$ as desired. 
\end{proof}

\subsection{Modified Roichman Rectangles and 2-Ribbons}
\begin{defi}
	Given $m\geq 3$, let $\bar{N}=16(m-1)+1$. The corresponding \textbf{Modified Roichman Rectangle} $\bar{R}_m$ is the rectangular grid graph of dimensions $(\bar{N}\times m)$.
\end{defi}

\begin{theorem}\label{thm:modified_roichman}
	Given $m\geq 3$. Let $\bar{\Psi}=8m-7$, then $$\bar{\Psi}\leq\Gamma(\bar{R}_m)\leq 8m-6.$$
\end{theorem}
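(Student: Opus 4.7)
The plan has two halves, one easy and one delicate.

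For the upper bound, $\bar{R}_m$ has $(\bar{N}-1)m + \bar{N}(m-1) = 32m^2 - 47m + 15$ edges, using $\bar{N} = 16(m-1)+1$. A direct check gives $\binom{8m-6}{2} = 32m^2 - 52m + 21 \leq 32m^2 - 47m + 15$ for $m \geq 2$, while $\binom{8m-5}{2} = 32m^2 - 44m + 15 > 32m^2 - 47m + 15$. Lemma \ref{lemma:ineq1}(2) then yields $\Gamma(\bar{R}_m) \leq 8m - 6$.

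For the lower bound, the plan is to mimic Roichman's formula from Theorem \ref{thm:roichman1} but reduce modulo the odd value $\bar\Psi = 8m - 7$ rather than $\hat\Psi$. Set $c(i,j) \equiv (i+3j)/2 \pmod{\bar\Psi}$ when $i+j$ is even, and $c(i,j) \equiv c(i-1,j) - 4(j-1) + 2 \pmod{\bar\Psi}$ on odd cells with $i \geq 2$ and $2 \leq j \leq m-1$. The calculation in the proof of Theorem \ref{thm:roichman1} carries over verbatim: the four modular offsets from any odd interior cell to its neighbors are $\{4(j-1)-s : 0 \leq s \leq 3\}$, so as $j$ sweeps $\{2, \ldots, m-1\}$ these offsets realize every color pair whose modular difference lies in $\{1, \ldots, 4m-8\} \cup \{4m+1, \ldots, 8m-8\}$ modulo $\bar\Psi$. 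Because $\bar\Psi$ is now odd (not twice $4m-8$), the four differences $\mathcal{M} = \{4m-7, 4m-6, 4m-5, 4m-4\}$ remain unrealized, accounting for $4\bar\Psi$ missing pairs.

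To realize these missing pairs the plan is to exploit the \emph{free} cells (odd cells in row $1$, column $1$, and column $m$) left uncolored by the recursive formula. A free cell $(2k, 1)$ in column $1$ has three formula-colored neighbors whose colors are three consecutive residues $k+1, k+2, k+3 \pmod{\bar\Psi}$; by assigning it a suitable color $x = k - (4m-8)$ one makes the three realized offsets equal to the three consecutive missing values $4m-7, 4m-6, 4m-5$. Symmetrically, a free cell $(i, m)$ in column $m$ has three formula-colored neighbors with three consecutive colors $p-1, p, p+1$ where $p = (i-1+3m)/2$, so a suitable choice of its color realizes offsets $4m-6, 4m-5, 4m-4$. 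Sweeping $k$ across the $\bar\Psi - 1$ free cells of column $1$ realizes nearly all pairs with offsets in $\{4m-7, 4m-6, 4m-5\}$; sweeping free cells of column $m$ covers the fourth missing offset $4m-4$ together with the few pairs column $1$ cannot reach. Row-$1$ free cells can then be colored arbitrarily.

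The main obstacle is the combinatorial bookkeeping that shows the allocation of free cells to missing pairs has no collisions and that columns $1$ and $m$ jointly supply every missing pair. The dimension $\bar{N} = 16(m-1)+1$ is calibrated so that column $1$ contains exactly $\bar\Psi - 1$ free cells, just short of covering all $\bar\Psi$ pairs for any single offset in $\{4m-7, 4m-6, 4m-5\}$, and the free cells of column $m$ precisely supply the leftover pairs plus the fourth missing offset. Completing and verifying this allocation yields an $(8m-7)$-complete coloring and establishes $\Gamma(\bar{R}_m) \geq 8m - 7$.
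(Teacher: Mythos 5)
Your upper bound is correct and is essentially the paper's argument (the paper in fact miscomputes $\binom{8m-5}{2}$ as $32m^2-44m+21$; your value $32m^2-44m+15$ is right, and the inequality $|E(\bar{R}_m)|<\binom{8m-5}{2}$ holds either way). The lower bound, however, is a plan rather than a proof, and beyond the bookkeeping you explicitly defer it contains a concrete accounting error. When you run Roichman's formula modulo the \emph{odd} number $\bar{\Psi}=8m-7$ on a grid with $\bar{N}=16m-15$ cells per row, each interior row contains only $8m-8=\bar{\Psi}-1$ colored odd vertices; since consecutive odd vertices in a row increment their color by $1$, they attain only $\bar{\Psi}-1$ of the $\bar{\Psi}$ colors. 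Hence for every difference $d=4(j-1)-s$ with $j\in\{2,\dots,m-1\}$, $s\in\{0,1,2,3\}$ --- and such a $d$ is realized \emph{only} in row $j$, since $\bar{\Psi}-d\geq 4m+1$ is never of that form --- exactly one pair $\{a_0(j),\,a_0(j)+d\}$ is missed. That is $4(m-2)$ additional missing pairs on top of the $4\bar{\Psi}$ you count from the classes $\{4m-7,\dots,4m-4\}$, and your free-cell allocation never addresses them. Even within your own target classes the scheme as stated does not close: the side with $\bar{\Psi}-1$ free cells, aimed at offsets $\{4m-7,4m-6,4m-5\}$, leaves one pair of each of those classes uncovered, while the opposite side, aimed at $\{4m-6,4m-5,4m-4\}$, can never supply the leftover pair of class $4m-7$ and has at most $\bar{\Psi}$ cells with which to cover all $\bar{\Psi}$ pairs of class $4m-4$ \emph{and} the leftovers; so cells must be retargeted individually, and the ``no collisions'' assertion is precisely the content that needs proving. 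In total roughly $36m$ pairs must be covered by roughly $16m$ free cells, each contributing three offsets that are forced to be consecutive and to share a common base color; whether such a schedule exists is a genuine combinatorial question, not a formality.

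The paper avoids all of this by keeping the modulus even: it takes the honest Roichman rectangle $R_{m+1}$ with its $8(m-1)$-complete coloring, deletes the first row and column after checking that the deleted column's adjacencies are duplicated at the opposite end and that the deleted row destroys only the pairs $\{a,a+1\}$, and then fills the empty odd cells of the last row with a single \emph{new} color $\hat{\Psi}+1$ in a pattern that simultaneously restores every pair $\{a,a+1\}$ and realizes every pair $\{x,\hat{\Psi}+1\}$. That yields $8(m-1)+1=8m-7$ colors with no residue-class surgery. If you want to salvage your route, you must (i) enlarge your list of missing pairs to include the $4(m-2)$ row-deficiency pairs, and (ii) exhibit an explicit assignment of base offsets and colors to the free cells and verify it covers everything; as written, neither step is done.
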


\begin{proof}
	The upper bound follows from Lemma \ref{lemma:ineq1}, since $$|E(\bar{R}_m)|=32m^2-47m+15<32m^2-44m+21=\binom{8m-5}{2}.$$ 
	Let $\bar{N}=16(m-1)+1$, and consider the Roichman Rectangle $G=R_{m+1}$ of height $16((m+1)-2)+2=\bar{N}+1$ and width $m+1$. Note that dropping the first row and column of $G$, we get a copy of the Modified Roichman Rectangle $\bar{R}_m$. Formally, we consider the subgraph $H\subset G$ induced on the vertices $(i,j)$ with $2\leq i\leq \bar{N}+1$ and $2\leq j\leq m+1$, so $H=\bar{R}_m$. Let $c$ be the partial coloring in the proof of Theorem \ref{thm:roichman1}, \textit{partial} since it leaves the odd vertices in the first column and the outermost rows \textit{empty}. Consider now $\bar{c}$ the corresponding restriction of $c$ to $H$. Let $\hat{\Psi}=8(m-1)$, and recall $c$ is a $\hat{\Psi}$-complete coloring for $G$, however by restricting our attention to $H$, $\bar{c}$ is not complete anymore, let us compute its remainder. 
	
     Note that the color of an even vertex $(1,2r+1)$ in the first column, and the even vertex in the column $\bar{N}=16m-15$ at the same row, are the same:
	
	$$c(1,2r+1)=3r+2 \mod \hat{\Psi}$$ $$c(16m-15,2r+1)=8m-3r-6 \mod\hat{\Psi}$$
	
	Since $(8m-3r-6)-(3r+2)=8m-8=\hat{\Psi}$, $c(1,2r+1)=c(\bar{N},2r+1)$. And the odd vertices in the columns 2 and $\bar{N}+1$ are thus colored the same, since they only depend on their even neighbor to the left and the row number. Therefore, removing the first column removes no pair of adjacent colors, since these pairs also appear between the last two columns. 
	
	In the first row of $G$ only the even vertices are non-empty. Thus, if $(x,1)$ is an even vertex of color $a$, its only nonempty neighbor is the odd vertex $(x,2)$, which has color $(a-1)\mod\hat{\Psi}$. Hence, by removing this row, we remove the adjacent pairs of colors $A=\{\{a,a+1\mod\hat{\Psi}\}$ for $0\leq a\leq \hat{\Psi}-1\}$, so $\bar{c}$ is a $\hat{\Psi}$-complete partial coloring with remainder $A$.
	
	We now extend $\bar{c}$ to the empty odd vertices in the last row of $H$, Given an odd vertex $(i,m+1)$, $2\leq i \leq \bar{N}+1$, define
	
	$$\bar{c}(i,m+1)=\begin{cases}
	\hat{\Psi} +1 & \text{ if } c(i-1,m+1)\text{ is even}\\
	c(i-1,m+1) & \text{ otherwise.}
	\end{cases}$$ 
	Note that our construction doesn't give a proper coloring, because Theorem \ref{thm:modified_roichman} only includes the complete coloring number $\Gamma(\bar{R}_m)$ and not the achromatic number.  We now proceed to prove that $\bar{c}$ is a $(\hat{\Psi}+1)$-complete coloring. Recall that in the last row of $G$, the even vertices take on all the colors in $[\hat{\Psi}]$, and that they increment by 1, this is still the case for $H$, since the even vertices of the column 1 have the same colors as in the column $\bar{N}$. By the definition of $\bar{c}$, all even vertices with an even color have as the neighbor to the right the new color $\hat{\Psi}+1$; moreover, the next even vertex to the right must have the successor odd color, so this vertex has as the neighbor to the left the new color $\hat{\Psi}+1$. All odd colors appear like this, since addition modulo $\hat{\Psi}$ preserves the parity, and because the first even vertex of row $m+1$ has the same color as the last one. So far we know that all pairs of colors $\{x,\hat{\Psi}+1\}$ appear in $\bar{c}$. 
	Finally note that if color $2r+1$ appears in an even vertex, we copy the same color to the vertex to its right, and this one will have as the neighbor above color $2r$, and as the neighbor to the right $2r+2$. Once again, the same happens at the extremes of the row. So all pairs of colors $\{i,i+1\}$ for $i\in[\hat{\Psi}]$ are adjacent in $\bar{c}$ (see Figure \ref{fig:last_row_bar_c}). Therefore, $\bar{c}$ is a $(\hat{\Psi}+1)$-complete partial coloring, and so $\Gamma(\bar{R}_m)\geq \hat{\Psi}+1=8(m-1)+1=8m-7$.

\begin{figure}
	\begin{tabular}{m{1em}|m{3em}|m{3em}|m{3em}|m{3em}|m{3em}|m{3em}|m{1em}}
		\hline
		\cellcolor[HTML]{9B9B9B} & & \cellcolor[HTML]{9B9B9B}$2r-1$ &  & \cellcolor[HTML]{9B9B9B}$\phantom{xx}2r$ &                                & \cellcolor[HTML]{9B9B9B}$2r+1$& \\ \hline
		$\cdots$ &\multicolumn{1}{|c|}{\cellcolor[HTML]{9B9B9B}$2r$} & $\hat{\Psi}+1$                 & \cellcolor[HTML]{9B9B9B}$2r+1$ & $2r+1$                       & \cellcolor[HTML]{9B9B9B}$2r+2$ & $\hat{\Psi}+1$ & $\cdots$                \\ \hline
	\end{tabular}
\captionsetup{justification=centering}
\caption{Definition of coloring $\bar{c}$ at rows $m$ and $m+1$. The shaded cells are the even vertices, the color of the odd vertices in row $m$ are not displayed.}
\label{fig:last_row_bar_c}
\end{figure}
	
\end{proof}

We will use the following construction together with the Roichman Modified Rectangle to prove our theorem \ref{thm:improved_roichman}.

\begin{defi}
	Given $n\geq 1$, we call \textbf{2-Ribbon of length $n$} the graph grid of dimensions $(2\times n)$, which we will denote $L_n$. 
\end{defi} 

\begin{lemma}\label{lemma:2_ribbon}
	Given $k\geq 2$, we can define a coloring of the 2-Ribbon of lenght $k+3$, $L_{k+3}$, with the colors in $[k+3]$ that realizes all color pairs in $[k]\times\{k+1,k+2,k+3\}\cup\{(k+1,k+2),(k+1,k+3),(k+2,k+3)\}$.
\end{lemma}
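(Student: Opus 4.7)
The plan is to give an explicit construction of the required coloring. Write $A = k+1$, $B = k+2$, $C = k+3$ for the three large colors. First, I place the small colors along a staircase: for each $j \in [k]$, assign color $j$ to cell $(r_j, j+1)$, where $r_j = 1$ if $j$ is odd and $r_j = 2$ if $j$ is even, so that consecutive small-color cells zig-zag from one row to the other as we move right. Second, I fill the ``zigzag'' of cells $(1,1), (2,2), (1,3), (2,4), \ldots$ immediately adjacent to the staircase with the three large colors in the cyclic pattern $A, B, C, A, B, C, \ldots$; this zigzag has length $k+2$, extending one cell past the last small. By construction every small-color cell has exactly three large-colored neighbors, which correspond to three consecutive terms of the cyclic sequence, hence equal $\{A, B, C\}$ in some order. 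This realizes every pair in $[k] \times \{A, B, C\}$.

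After these assignments, at most four cells remain uncolored: $(2,1)$, one cell of column $k+2$, and both cells of column $k+3$. I use them to realize the three pairs among $\{A, B, C\}$. Setting $c(2, 1) = C$ produces the pairs $(A, C)$ and $(B, C)$ via the vertical edge to $(1, 1) = A$ and the horizontal edge to $(2, 2) = B$, respectively. Setting the two cells of column $k+3$ to $A$ and $B$ produces the pair $(A, B)$ via the vertical edge between them. The one remaining free cell in column $k+2$ may be assigned any color to complete a full coloring of $L_{k+3}$.

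The main obstacle is routine book-keeping: verifying that the zigzag really does place three distinct large colors around every small. This reduces to the observation that any three consecutive terms of the period-three cycle $A, B, C$ form a permutation of $\{A, B, C\}$. The parity of $k$ determines whether the zigzag extension lands in row $1$ or row $2$ of column $k+2$, but it has no effect on the rest of the argument.
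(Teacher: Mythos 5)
Your construction is correct and is essentially the same as the paper's: both place the $k$ small colors on one checkerboard class and the three large colors cyclically on the other, so that each small cell's three neighbours are three consecutive terms of the period-three cycle, with the leftover end cells used for the three pairs among $\{k+1,k+2,k+3\}$. The only (harmless) difference is that you shift the small colors to start in column $2$ so each has a full set of three neighbours, whereas the paper starts at column $1$ and repeats color $1$ at column $k+1$ to compensate for the corner cell having only two neighbours.
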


\begin{proof}
	Consider the following coloring $c:V(L_{k+3})\to [k+3]$,
	$$c(i,j)=\begin{cases}
	j\mod k & \text{ if } 2| (i+j) \text{ and } 1\leq j\leq k+1\\
	k + (j\mod 3) & \text{ if } 2\not|(i+j)\text{ and } 1\leq j\leq k+2\\
	k + (k\mod 3) & \text{ if } 2|(i+j)\text{ and } j=k+2\\
	k + (k+1\mod 3) & \text{ if } 2|(i+j)\text{ and } j=k+3
	\end{cases}\text{ for } 1\leq j\leq k$$
	
	Where for simplicity we take $(3\mod 3)=3$ and $(k\mod k)=k$. Note that columns $k+1$, $k+2$ and $k+3$ are chosen to produce the pairs $(k+1,k+2)$, $(k+1,k+3)$ and $(k+2,k+3)$. All even vertices in rows $2\leq j\leq k+1$ have neighbors with colors $k+1, k+2$ and $k+3$, and these even vertices take once each color in $[k]$, producing then the remaining pairs of adjacent colors $[k]\times\{k+1,k+2,k+3\}$ (See Figure \ref{fig:2-ribbon}).
\end{proof}

\begin{figure}
	\begin{tabular}{|c|c|c|c|c|c|c|c|c|c|c|c|}
		\cline{1-7} \cline{9-12}
		\cellcolor[HTML]{9B9B9B}1 & k+2                       & \cellcolor[HTML]{9B9B9B}3 & k+1                       & \cellcolor[HTML]{9B9B9B}5 & k+3                       & \cellcolor[HTML]{9B9B9B}7 & $\cdots$                         & \cellcolor[HTML]{9B9B9B}k & k+3                       & \cellcolor[HTML]{9B9B9B}k+2 &                             \\ \cline{1-7} \cline{9-12} 
		k+1                       & \cellcolor[HTML]{9B9B9B}2 & k+3                       & \cellcolor[HTML]{9B9B9B}4 & k+2                       & \cellcolor[HTML]{9B9B9B}6 & k+1                       & \cellcolor[HTML]{FFFFFF}$\cdots$ & k+2                       & \cellcolor[HTML]{9B9B9B}1 & k+1                         & \cellcolor[HTML]{9B9B9B}k+3 \\ \cline{1-7} \cline{9-12} 
	\end{tabular}
	
	\captionsetup{justification=centering}
	\caption{Definition of coloring ${c}$ on the 2-ribbon $L_{k+2}$ (transposed to save space). The shaded cells are the even vertices.}
	\label{fig:2-ribbon}
\end{figure}

We now proceed to prove our main result Theorem \ref{thm:improved_roichman}.

\begin{proof}[Proof of Theorem \ref{thm:improved_roichman}]
	Since Lemma \ref{lemma:n_9_19} gives $\Gamma(G_n)\geq 2n-3$ for $n\leq 19$, we may suppose $n\geq 20$. Let $\Gamma_0(n)=8\left\lfloor\frac{n}{4}\right\rfloor-7$. First we will use the Modified Roichman Rectangles to prove that $\Gamma(G_n)\geq \Gamma_0(n)$. To do this, let $m=\left\lfloor\frac{n}{4}\right\rfloor$ and consider the Modified Roichman Rectangle $\bar{R}_m$ with the coloring $c$ we produced in Theorem \ref{thm:modified_roichman}. Thus $c$ in $\bar{R}_m$ realizes all pairs of different colors in $[8m-7]^2$, and has dimensions $(m\times (16m-15))$. Note that by letting $k=4$ we satisfy $4m\leq n$ and $(16m-15)+3\leq 4n$, so we can use the Copy and Paste lemma \ref{lemma:copy_paste} to produce a coloring of $G_n$ that realizes all pairs in $[8m-7]^2$. For simplicity let us call this $(8m-7)$-complete coloring also $c$. In what follows we will show how to extend this coloring to prove the theorem. 
	
	\begin{enumerate}
		\item $\mathbf{(n=4m)}$. Then $\Gamma_0=8\left(\frac{n}{4}\right)-7=2n-7$. We can see $c$ as a $(2n-6)$-complete partial coloring with remainder $\{2n+6\}\times[2n-7]$. Recall the comment in the proof of the Copy and Paste Lemma, there is a rectangle of dimensions $(m\times 12)$ \textit{empty} in $G_n$. From Lemma \ref{lemma:extension_path} by using $k=2n-6$, we can get a coloring of the path $P_M$ that realizes all color pairs $\{2n+6\}\times[2n-7]$, where $M=3n-9=12m-9$. So $P_M$ has $12m-8$ vertices, so we can embed it as a path into the empty rectangle of dimensions $(m\times 12)$, producing a $(2n-6)$-complete extension of $c$ (see Figure \ref{fig:embedding_paths}). The coloring $c$ and its extension produces a $(2n-6)$-complete partial coloring of $G_n$, so $\Gamma(G_n)\geq 2n-6$.		
		\item $\mathbf{(n=4m+1)}$. Then $\Gamma_0=8\left(\frac{n-1}{4}\right)-7=2n-9$. We can regard $c$ as a $(\Gamma_0+3)$-complete partial coloring with remainder $[2n-6]^2\setminus[2n-9]^2$. We will construct $c'$ a $(2n-6)$-complete extension of $c$. To do so, recall the comment in the proof of the Copy and Paste Lemma, there is a rectangle of dimensions $(m\times 16)$ \textit{empty} in $G_n$, and also the last column is empty. By combining the last 16 rows of the last column of $G_n$ together with said rectangle, we get an empty rectangle of dimensions $((m+1)\times 16)$. From Lemma \ref{lemma:2_ribbon}, we can get a 2-Ribbon $L$ of length $\Gamma_0+3=8m-4$ with a coloring $c_0$ that realizes precisely the pairs in $[2n-6]^2\setminus[2n-9]^2$. Let $M$ be a rectangle of dimensions $(16\times (m+1))$. Note that with $k=8$, $L$ and $M$ satisfy the inequalities of the Copy and Paste Lemma:
		$8\cdot 2\leq 16\text{ and } (8m-4)+7\leq 8(m+1).$
		So we can produce a coloring of $M$ that realizes $[2n-6]^2\setminus[2n-9]^2$. Finally, transposing $M$ we can embed it into the empty rectangle in $G_n$, thus producing the $(2n-6)$-complete extension of $c$ that we wanted, hence $\Gamma(G_n)\geq 2n-6$.
		
		\item $\mathbf{(n=4m+2}$ \textbf{or} $\mathbf{4m+3)}$. We will combine the ideas of the previous two cases to extend the partial coloring $c$ into a $(8m-3)$-complete partial coloring. As before, the empty rectangle in $G_n$ has at least 20 rows (here we use $n\geq 20$) and $m$ columns. There are also at least 2 empty columns at the side of this rectangle, of height $n\geq 4m+2$. This rectangle and this 2 columns form a connected subgraph $L$ of $G_n$ (see Figure \ref{fig:embedding_pathsb}). We now will produce a partial coloring on $L$ that realizes all the pairs $[8m-3]^2\setminus[8m-7]^2$. From Lemma \ref{lemma:extension_path}, we can color a path $P_M$, where $M=\frac{3(8m-7)-1}{2}=12m-11$, such that it realizes all pairs $[8m-6]^2\setminus[8m-7]^2$. This path has $12m-10$ vertices, an can be embedded in the graph $L$ as the path in Figure \ref{fig:embedding_pathsb}, occupying all vertices of $L$ except for the subgraph $Q$, where $Q$ is a $((m+1)\times 16)$ rectangle without the cells at positions $(m,15)$ and $(m,16)$. Let $Q^*$ be a rectangle of dimensions $(16\times (m+1))$. By Lemma \ref{lemma:2_ribbon} we can get a 2-Ribbon of length $8m-3$ that realizes all pairs in $[8m-3]^2\setminus[8m-6]^2$, setting $k=8$, we satisfy the inequalities from the Copy and Paste Lemma, so we have a coloring of $Q^*$ that realizes said pairs. Note that, from the comment of the Copy and Paste Lemma, we get an empty rectangle at the corner of $Q^*$ of dimensions $(2\times4)$. Removing those vertices from $Q^*$ and transposing it, we can fit it into the subgraph $Q$. This last embedding and the embedding of $P_M$ together, produce a $(8m-3)$-complete extension for the partial coloring $c$, hence proving
		$$\Gamma(G_n)\geq 8m-3=\begin{cases}
		2n-7 & \text{ if } n\equiv 2\mod 4\\
		2n-9 & \text{ if } n\equiv 3\mod 4 \qedhere
		\end{cases} $$
	\end{enumerate}	
\end{proof}

\begin{figure}
	\centering
	\begin{subfigure}[t]{0.45\textwidth}
		\centering
		\includegraphics[width=0.7\textwidth]{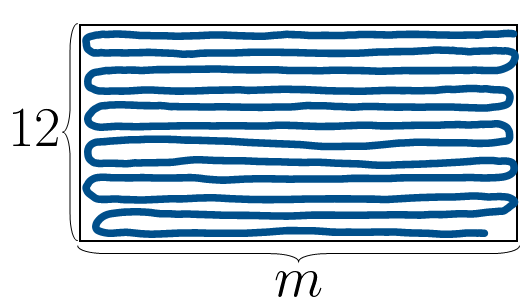}
		\caption{Embedding $P_M$ in a rectangle $(m\times 12)$.}
	\end{subfigure}%
	\begin{subfigure}[t]{0.45\textwidth}
		\centering
		\includegraphics[width=1\textwidth]{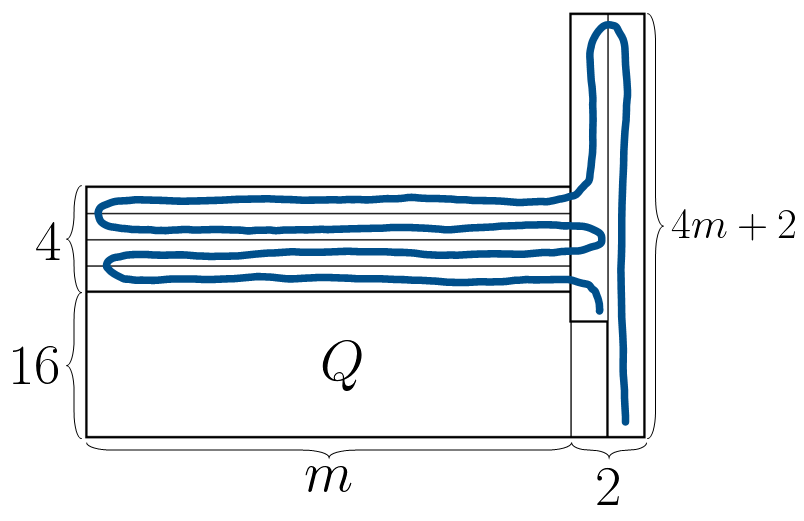}
		\caption{Subgraphs $L$  and $Q$ with embedding of $P_M$.}\label{fig:embedding_pathsb}
	\end{subfigure}
	\caption{Embedding paths for Theorem \ref{thm:improved_roichman}}
	\label{fig:embedding_paths}
\end{figure}

\section{Higher Dimensions}
A natural follow-up question is to find the complete coloring number for $d$-dimensional square grids or lattices, for $d\geq3$. Since a $d$-dimensional grid of dimensions $n\times\dots\times n$ has $dn^{d-1}(n-1)$ edges, from Lemma \ref{lemma:ineq1}, we see that the number $$q^d(n)=\max\left\{k: \binom{k}{2}\leq dn^{d-1}(n-1)\right\}$$ is an upper bound for the complete coloring number. If $G_n^d$ denotes the $d$-dimensional grid, the question becomes: Is $\Gamma(G^d_n)=q^d(n)$, for every $n\geq n_0$?

Edwards work in \cite{Edwards2009} shows this is again true for $n_0$ big enough, but again leaves the question open for small values of $n$. When $d\geq 3$ the problem quickly becomes intractable even for small values of $n$, Table \ref{fig:3D} summarizes the best complete colorings we were able to find for $n\leq 8$.

\begin{table}[]
\centering
\begin{tabular}{|c|c|c|}
\hline
$n$ & $\Gamma(G^3_n)\geq$ & $q(n)$ \\ \hline
3   & 10                  & 10     \\
4   & 16                  & 17     \\
5   & 23                  & 25     \\
6   & 30                  & 33     \\
7   & 39                  & 42     \\
8   & 48                  & 52     \\ \hline
\end{tabular}
\caption{Bounds found for 3D Grids.}\label{fig:3D}
\end{table}

When $n=2$, all vertices have degree 3, and the upper bound is obtained from item 3. of Lemma \ref{lemma:ineq1}: $\max\left\{k: k\left\lceil\frac{k-1}{3}\right\rceil\leq 8\right\}=4$, so $\Gamma(G_2^3)\leq 4$ which can be easily attained. Hence the only two complete coloring numbers we know for the 3-dimensional case are $\Gamma(G_2^3)=4$ and $\Gamma(G_3^3)=10$.

\nocite{*}
\bibliographystyle{alpha}
\bibliography{References}

\end{document}